\theoremstyle{change} 
\newtheorem{theorem}{Theorem}[section] 
\newtheorem{lemma}[theorem]{Lemma} 
\newtheorem{proposition}[theorem]{Proposition}
\newtheorem{nothing}[theorem]{} 
\newenvironment{proof}{\noindent{\bf Proof}\ }{\qed\bigskip}
\renewcommand{\le}{\leqslant}
\renewcommand{\ge}{\geqslant} 
\renewcommand{\marginpar}[1]{}
\newcommand{\Ctilde}{\widetilde{C}}
\newcommand{\Hom}{\mathrm{Hom}}
\newcommand{\HomhatH}{\Hom^{\wedge}_{\scrH}}
\newcommand{\im}{\mathrm{im}}
\newcommand{\liso}{\mathop{\longrightarrow}\limits^{\sim}}
\newcommand{\NN}{\mathbb{N}}
\newcommand{\qed}{\nobreak\hfill
                   \vbox{\hrule\hbox{\vrule\hbox to 5pt
                   {\vbox to 8pt{\vfil}\hfil}\vrule}\hrule}}
\newcommand{\lliso}{\mathop{\longleftarrow}\limits^{\sim}}
\newcommand{\scrB}{\mathscr{B}}
\newcommand{\scrD}{\mathscr{D}}
\newcommand{\scrDhat}{\scrD^{\wedge}}
\newcommand{\scrDns}{\scrD^{\mathrm{ns}}}
\newcommand{\scrDst}{\scrD^{\mathrm{st}}}
\newcommand{\scrH}{\mathscr{H}}
\newcommand{\scrK}{\mathscr{K}}
\newcommand{\scrP}{\mathscr{P}}
\newcommand{\scrT}{\mathscr{T}}
\newcommand{\scrTbar}{\overline{\scrT}}
\newcommand{\scrThat}{\scrT^{\wedge}}
\newcommand{\scrTrs}{\scrT^{\mathrm{rs}}}
\newcommand{\scrTst}{\scrT^{\mathrm{st}}}
\newcommand{\Xtilde}{\widetilde{X}}
\title{Permutation Resolutions for Specht Modules of Hecke Algebras\footnote{{\bf MR Subject Classification:} 20C08. {\bf Keywords:} Iwahori Hecke algebra, Specht module, permutation module, resolution}}
\author{\small Robert Boltje\\
        \small Department of Mathematics\\
        \small University of California\\
        \small Santa Cruz, CA 95064\\
        \small U.S.A.\\
        \small boltje@ucsc.edu
       \and
        \small Filix Maisch\\
        \small Department of Mathematics\\
        \small Oregon State University\\
        \small Corvallis, OR 97331\\
        \small U.S.A.\\
        \small maischf@science.oregonstate.edu}
\date{October 17, 2011\\ {\small (revised March 25, 2012)}}
\begin{document}

\sloppy

\maketitle


\begin{abstract}
\noindent 
In \cite{BH}, a chain complex was constructed in a combinatorial way which conjecturally is a resolution of the (dual of the) integral Specht module for the symmetric group in terms of permutation modules. In this paper we extend the definition of the chain complex to the integral Iwahori Hecke algebra and prove the same partial exactness results that were proved in the symmetric group case.
\end{abstract}


\section*{Introduction}
In \cite{BH}, Hartmann and the first author constructed, for any composition $\lambda$ of a positive integer $r$, a finite chain complex of modules for the group algebra $RW$ of the symmetric group $W$ on $r$ letters over an arbitrary commutative ring $R$. The last module in this complex is the dual of the Specht module $S^\lambda$ and the other modules are permutation modules with point stabilizers given by Young subgroups of $W$. The construction of the chain complex was completely combinatorial and characteristic-free. It was conjectured that this chain complex is exact whenever $\lambda$ is a partition. In other words the chain complex is conjectured to be a resolution of the (dual) of the Specht module by permutation modules. Partial exactness results were  already established in \cite{BH} and a full proof of the exactness was given recently by Yudin and Santana (see \cite{SY}) by translating the construction via the Schur functor. Other permutation resolutions of Specht modules have been considered by Donkin in \cite{Donkin}, Akin in \cite{Akin}, Zelevinskii in \cite{Zelevinskii}, Akin-Buchsbaum in \cite{AB1} and \cite{AB2}, Santana in \cite{Santana}, Woodcock in \cite{Woodcock1} and \cite{Woodcock2}, Doty in \cite{Doty} and Yudin in \cite{Yudin}. See \cite[Section~6]{BH} for a more detailed comparison of these constructions with the construction in \cite{BH}.

\medskip
The goal of this paper is to (a) lift the construction of the chain complex in \cite{BH} for the group algebra $RW$ to a chain complex for modules of the Iwahori Hecke algebra $\scrH_{r,q}^R$ for any integral domain $R$ such that the specialization $q=1$ reproduces the original chain complex; and (b) lift the partial exactness proofs from \cite{BH} to the new construction.

\medskip
Both goals are achieved, with part (a) being relatively straightforward, but part (b) requiring much more subtle arguments. The paper is arranged as follows. In Section~1 we establish the necessary notation and recall results about the Hecke algebra $\scrH_{r,q}^R$ from \cite{DJ}. In Section~2 we introduce a group of homomorphisms between \lq permutation modules\rq\ of $\scrH_{r,q}^R$ and show that such homomorphisms are closed under composition. This is used in Section~3 to construct the chain complex which generalizes the chain complex in \cite{BH}. In Section~4 we prove that the chain complex in question is exact in degrees $-1$ and $0$, for arbitrary partitions $\lambda$, and that it is exact everywhere, for all partitions of the form $(\lambda_1,\lambda_2,\lambda_3)$ with $\lambda_3\le 1$, cf.~Theorems~\ref{exactness at -1}, \ref{exactness at 0}, \ref{exactness for tame compositions}. The very technical proof of a key lemma, Lemma~\ref{key lemma}, is postponed to Section~5.

\medskip
The authors are grateful to the referee for the careful reading and for pointing out a mistake in the proof of Proposition~\ref{ascending composition proposition} in a first version of this paper.


\section{Notation and Quoted Results}

Throughout this paper we denote by $R$ an integral domain. Unadorned tensor products and homomorphism sets will be understood to be taken over $R$. Moreover, we fix a positive integer $r$ and denote by $W$ the symmetric group on the set $\{1,\ldots,r\}$. Elements of $W$ are composed like functions applied on the right in order to be consistent with \cite{DJ}. Thus, $W$ acts from the right on $\{1,\ldots,r\}$ and we write $(i)w$ or $iw$ for $i\in\{1,\ldots,r\}$ and $w\in W$. We set $S:=\{(i,i+1)\mid i=1\ldots,r-1\}$, the standard choice of simple transpositions. The set of positive (resp.~non-negative) integers will be denoted by $\NN$ (resp.~$\NN_0$).

\begin{nothing}\label{Hecke algebra} {\em The Hecke algebra $\scrH$.}\quad
For a unit $q$ of $R$ we denote by $\scrH=\scrH_{r,q}^R$ the Hecke algebra as introduced in \cite{DJ}. It has an $R$-basis consisting of the elements $T_w$, $w\in W$. The multiplication in $\scrH$ is uniquely determined by the following formulas for $w\in W$ and $s\in S$:
\begin{equation*}
  T_wT_s=
  \begin{cases} T_{ws}, & \text{if $l(ws)=l(w)+1$, i.e., if $iw^{-1}<(i+1)w^{-1}$,}\\
       qT_{ws}+(q-1)T_w, & \text{if $l(ws)=l(w)-1$, i.e., if $iw^{-1}>(i+1)w^{-1}$.}
   \end{cases}
\end{equation*}
The element $T_1$ is the identity element of $\scrH$.

By $\trianglelefteq$ we denote the {\em strong Bruhat order} on $W$: for $u,v\in W$, one has $u\trianglelefteq v$ if there exists a reduced expression $u=s_1\cdots s_n$ of $u$ and integers $1\le i_1<\cdots< i_k\le n$ with $v=s_{i_1}s_{i_2}\cdots s_{i_k}$. While, for given $v,w\in W$, it is in general difficult to express $T_vT_w$ in terms of the basis elements $T_x$, $x\in W$, Lemma~2.1(ii) in \cite{DJ} and a quick induction argument on $l(w)$ imply that
\begin{equation}\label{Bruhat property}
  T_vT_w\in\sum_{w\trianglelefteq u\in W}RT_{vu}
\end{equation}
and that
\begin{equation}\label{lengths add property}
  T_vT_w=T_{vw}\quad\text{if $l(vw)=l(v)+l(w)$.}
\end{equation}
\end{nothing}

\begin{nothing}\label{permutation module}{\em The $\scrH$-modules $M^\lambda$.}\quad
We denote by $\Gamma$ the set of {\em compositions} $\lambda=(\lambda_1,\lambda_2,\ldots)$ of $r$ and by $\Lambda$ the set of partitions of $r$. We define the {\em dominance order} $\trianglelefteq$ on $\Gamma$ by
\begin{equation*}
  \lambda\trianglelefteq\mu\colon\iff \sum_{i=1}^e \lambda_i \le \sum_{i=1}^e \mu_i \quad\text{for all $e\ge 1$.}
\end{equation*}
This defines a partial order on $\Gamma$. Note that it differs from the dominance relation defined in \cite{DJ} which is not a partial order and is defined by
\begin{equation}\label{DJ dominance}
  \lambda\le\mu \colon\iff \mu'\trianglelefteq\lambda'\,,
\end{equation}
where $\lambda'=(\lambda'_1,\lambda'_2,\ldots)$ denotes the {\em dual} of $\lambda$, i.e., $\lambda'_i:=|\{k\in\NN\mid \lambda_k\ge i\}|$, for $i\in\NN$. Note that $\lambda'$ is always a partition.

Let $\lambda$ be a composition of $r$. One associates to $\lambda$ the set partition 
\begin{equation*}
  \scrP_\lambda:=\{ \{1,\ldots,\lambda_1\}, \{\lambda_1+1,\ldots,\lambda_1+\lambda_2\}, \ldots\}
\end{equation*}
of $\{1,\ldots,r\}$ and the {\em standard parabolic subgroup} $W_\lambda$ consisting of all $w\in W$ which fix $\scrP_\lambda$ element-wise. Note that $W_\lambda$ is generated by the elements $s=(i,i+1)\in S$ with the property that $i$ and $i+1$ belong to the same element of $\scrP_\lambda$. More generally, a subgroup of $W$ is called a {\em parabolic subgroup} if it arises as the stabilizer of an arbitrary set partition of $\{1,\ldots,r\}$, i.e., if it is conjugate to a subgroup of the form $W_\lambda$. Intersections of parabolic subgroups are again parabolic subgroups. The right $\scrH$-submodule $M^\lambda$ of the regular module $\scrH$ is defined as
\begin{equation*}
  M^\lambda:=x_\lambda\scrH\quad \text{with } x_\lambda:=\sum_{w\in W_\lambda}T_w\,.
\end{equation*}
For $v\in W_\lambda$ one has
\begin{equation}\label{trivial action}
  x_\lambda T_v = q^{l(v)} x_\lambda\,,
\end{equation}
cf.~\cite[Lemma~3.2]{DJ}. Every coset $W_\lambda w\in W_\lambda\backslash W$ has a unique element of smallest length. We denote the set of these distinguished coset representatives by $\scrD_\lambda$. For $w\in W_\lambda$ and $d\in\scrD_\lambda$ one has
\begin{equation}\label{additivity}
  l(wd)=l(w)+l(d)\quad\text{and}\quad T_{wd}=T_wT_d\,.
\end{equation}
The elements $x_\lambda T_d$, $d\in\scrD_\lambda$, form an $R$-basis of $M^\lambda$, cf.~\cite[Lemma~3.2]{DJ}.

As usual one identifies compositions $\lambda$ with Young diagrams and we say that $\lambda$ is the {\em shape} of the corresponding Young diagram. A $\lambda$-tableau is a filling of the $r$ boxes of the Young diagram of $\lambda$ with the numbers $1,2,\ldots,r$. We denote the set of $\lambda$-tableaux by $\scrT(\lambda)$ and usually denote elements of $\scrT(\lambda)$ by $t$. The $\lambda$-tableau which contains the entries $1,2,\ldots,r$ in ascending order is denoted by $t^\lambda$. Thus, for $\lambda=(1,0,2)$ one has
\begin{equation*}
  \raise6mm\hbox{$t^{(1,0,2)}=$\ \ }
  \vbox{\noindent \young(1)\\ \yng(0)\\   \young(2)\young(3)}
  \hspace{-12.5cm}
  \raise5mm\hbox{.}
\end{equation*}
This provides a standard numbering of the boxes of the Young diagram of $\lambda$ which we will use later. The group $W$ acts from the right on $\scrT(\lambda)$ by simply applying an element $w\in W$ to the entries of the tableau $t\in\scrT(\lambda)$. This action is free and transitive, and it yields a bijection
\begin{equation*}
  W\liso\scrT(\lambda)\,,\quad w\mapsto t^\lambda w\,.
\end{equation*}
A $\lambda$-tableau $t$ is called {\em row-standard} if its entries are increasing in each row from left to right. The row-standard $\lambda$-tableaux form a subset $\scrTrs(\lambda)$ of $\scrT(\lambda)$. Two $\lambda$-tableaux $t_1$ and $t_2$ are called {\em row-equivalent} if they arise from each other by rearranging elements within each row. We denote the row-equivalence class of $t$ by $\{t\}$ and the set of row-equivalence classes by $\scrTbar(\lambda)$. One has canonical bijections
\begin{equation}\label{d-t-bijection}
  \scrD_\lambda\liso\scrTrs(\lambda)\liso\scrTbar(\lambda)
\end{equation}
given by $d\mapsto t^\lambda d$ and $t\mapsto\{t\}$. Thus, the canonical basis $x_\lambda T_d$, $d\in\scrD_\lambda$, could also be parametrized by $\scrTrs(\lambda)$ or $\scrTbar(\lambda)$. For $d\in\scrD_\lambda$ we denote by 
\begin{equation*}
  \varepsilon_d\in\Hom(M^\lambda,R)
\end{equation*}
the $R$-module homomorphism with the property that $\varepsilon_d(x_\lambda T_e)=\delta_{d,e}$ for all $e\in\scrD_\lambda$. In other words, the elements $\varepsilon_d$, $d\in\scrD_\lambda$, form the dual basis of the $R$-basis $x_\lambda T_d$, $d\in \scrD_\lambda$. If $t\in\scrTrs(\lambda)$ corresponds to $d\in\scrD_\lambda$ under the canonical bijection in (\ref{d-t-bijection}) we will also write $\varepsilon_t$ instead of $\varepsilon_d$.

For $\lambda\in\Lambda$ we have an obvious bijection $\scrT(\lambda)\liso\scrT(\lambda')$, $t\mapsto t'$,
where $t'$ is the reflection of $t$ with respect to the diagonal axis. Note that $(tw)'=t'w$ for all $t\in\scrT(\lambda)$ and $w\in W$.
\end{nothing}

\begin{nothing}\label{Hom(M,M)} {\em The homomorphisms $\varphi_d^{\lambda,\mu}\colon M^\mu\to M^\lambda$.}\quad 
Let $\mu,\lambda\in\Gamma$ be  compositions of $r$. The elements of $\scrD_{\lambda,\mu}:=\scrD_\lambda \cap\scrD_\mu^{-1}$ form a set of representatives of the double cosets $W_\lambda\backslash W/W_\mu$, and each element $d\in\scrD_{\lambda,\mu}$ is the unique element of shortest length in its double coset $W_\lambda d W_\mu$. By \cite[Theorem~3.4]{DJ}, the set $\scrD_{\lambda,\mu}$ parametrizes an $R$-basis $\varphi_d^{\lambda,\mu}$, $d\in\scrD_{\lambda,\mu}$, of $\Hom_\scrH(M^\mu,M^\lambda)$ given by
\begin{equation}\label{explicit permutation homomorphism}
  \varphi_d^{\lambda,\mu}(x_\mu)= \sum_{w\in W_\lambda d W_\mu} T_w = 
  x_\lambda \sum_{e\in \scrD_\nu\cap W_\mu} T_{de} = 
  x_\lambda T_d \sum_{e\in\scrD_{\nu}\cap W_\mu} T_e\,,
\end{equation}
where $\nu\in\Gamma$ is determined by $W_\nu=d^{-1}W_\lambda d \cap W_\mu$. The last equality follows from (\ref{additivity}). We want to mention that the cited theorem requires that $R$ is a principal ideal domain. But a careful examination of the canonical isomorphisms in \cite[Theorems~2.5, 2.6, 2.7]{DJ} involved in the proof of the theorem show that this hypothesis is unnecessary. (More precisely, the $R$-modules $\Hom_\scrH(M_{\scrH_\nu},N\otimes T_d)$ in \cite[Theorem~2.8]{DJ} are free of rank one in the case that $M$ and $N$ are trivial modules.)

We will use a combinatorial description of the set $\scrD_{\lambda,\mu}$ in terms of the set $\scrT(\lambda,\mu)$ of generalized tableaux $T$ of shape $\lambda$ and content $\mu$. Such a generalized tableau $T$ is a filling of the $r$ boxes of the Young diagram of shape $\lambda$ with $\mu_1$ entries equal to $1$, $\mu_2$ entries equal to $2$, etc. We denote by $T^\lambda_\mu\in\scrT(\lambda,\mu)$ the generalized tableau which has its boxes filled in the natural order. If we number the boxes of the Young diagram of $\lambda$ according to the  entries of $t^\lambda\in\scrT(\lambda)$, we may view $\scrT(\lambda,\mu)$ as the set of functions $T\colon \{1,\ldots, r\}\to\NN$ with the property $|T^{-1}(i)|=\mu_i$ for all $i\in\NN$. The group $W$ acts transitively on $\scrT(\lambda,\mu)$ from the left by $(wT)(i):=T(iw)$, for $T\in\scrT(\lambda,\mu)$, $w\in W$ and $i\in\{1,\ldots,r\}$. The stabilizer of $T_\mu^\lambda$ is equal to $W_\mu$. This defines a bijection $W/W_\mu\liso\scrT(\lambda,\mu)$, $wW_\mu\mapsto wT^\lambda_\mu$. We call two generalized tableaux $T_1,T_2\in\scrT(\lambda,\mu)$ {\em row-equivalent} if they arise from each other by rearranging the entries within the rows, i.e., if $T_2=wT_1$ for some $w\in W_\lambda$. If we denote the row equivalence class of $T\in\scrT(\lambda,\mu)$ by $\{T\}$  and the set of such classes by $\scrTbar(\lambda,\mu)$ then we have obtain a bijection $W_\lambda\backslash W/W_\mu \liso \scrTbar(\lambda,\mu)$, $W_\lambda w W_\mu\mapsto \{wT^\lambda_\mu\}$. A generalized tableau $T\in\scrT(\lambda,\mu)$ is called {\em row-semistandard} if in each of its rows the entries are in their natural order from left to right. We denote the set of these tableaux by $\scrTrs(\lambda,\mu)$. Each row-equivalence class contains a unique row-semistandard element. Thus, $\scrTrs(\lambda,\mu)\to\scrTbar(\lambda,\mu)$, $T\mapsto\{T\}$, is a bijection. Altogether, we now have canonical bijections
\begin{equation}\label{d-T-bijection}
  \scrD_{\lambda,\mu}\liso W_\lambda\backslash W/W_\mu \liso \scrTbar(\lambda,\mu) \lliso \scrTrs(\lambda,\mu)
\end{equation}
and we may use each of these sets to parametrize the basis $\varphi_d^{\lambda,\mu}$, $d\in\scrD_{\lambda,\mu}$, of $\Hom_\scrH(M^\mu, M^\lambda)$. So, if $T\in\scrTrs(\lambda,\mu)$ and $d\in\scrD_{\lambda,\mu}$ correspond under the above bijection, we also write $\varphi_T^{\lambda,\mu}$ instead of $\varphi_d^{\lambda,\mu}$.

We leave it to the reader to check that $d\in\scrD_{\lambda,\mu}$ corresponds to $T\in\scrTrs(\lambda,\mu)$ if and only if $\varphi_T^{\lambda,\mu}=\theta_T\colon M^\mu\to M^\lambda$ in the case $q=1$, with $\theta_T$ the homomorphism defined in \cite[Subsection~1.5]{BH}.
\end{nothing}

\begin{nothing}\label{Specht modules} {\em The Specht modules $S^\lambda$.}\quad
Let $\lambda\in\Lambda$ be a partition of $r$. One defines elements
\begin{equation*}
  y_\lambda:=\sum_{w\in W_\lambda} (-q)^{-l(w)} T_w\quad\text{and}\quad
  z_\lambda:=x_\lambda T_{w_\lambda} y_{\lambda'} \in M^\lambda
\end{equation*}
of $\scrH$, where $w_\lambda\in W$ is defined by the property that $(t^\lambda w_\lambda)'=t^{\lambda'}$. The {\em Specht module} $S^\lambda$ is defined by
\begin{equation*}
  S^\lambda:=z_\lambda \scrH\,.
\end{equation*}
Since $z_\lambda\in M^\lambda$, $S^\lambda$ is an $\scrH$-submodule of $M^\lambda$.

A $\lambda$-tableau $t\in\scrT(\lambda)$ is called a {\em standard} $\lambda$-tableau if $t\in\scrTrs(\lambda)$ and $t'\in\scrTrs(\lambda')$. We denote the set of standard $\lambda$-tableaux by $\scrTst(\lambda)$. More generally, for $\mu\in\Gamma$, we say that a generalized tableau $T\in\scrT(\lambda,\mu)$ is {\em standard} if $T\in\scrTrs(\lambda,\mu)$ and if the entries of $T$ are strictly increasing along each column from top to bottom. We denote the set of standard $\lambda$-tableaux of content $\mu$ by $\scrTst(\lambda,\mu)$.

We extend the definition of $S^\lambda$, $\scrTst(\lambda)$, and $\scrTst(\lambda,\mu)$ to arbitrary compositions $\lambda\in\Gamma$ by setting $S^\lambda:=0$, $\scrTst(\lambda)=\emptyset$, and $\scrTst(\lambda,\mu)=\emptyset$ if $\lambda$ is not a partition.

Recall that the {\em weak Bruhat order} $\le$ on $W$ is defined by $u\ge v$ if and only if there exists a reduced expression $v=s_1\cdots s_n$ for $v$ with $s_1,\ldots,s_n\in S$ and $k\in\{0,\ldots,n\}$ such that $u=s_1\cdots s_k$. Given $\lambda\in\Lambda$, \cite[Lemma~1.5]{DJ} states that
\begin{equation}\label{weak Bruhat 1}
  \scrDst_\lambda:=\{d\in W\mid d\ge w_\lambda\}\liso \scrTst(\lambda)\,,\quad d\mapsto t^\lambda d\,.
\end{equation}
is a bijection.
Thus, $\scrDst_\lambda\subseteq \scrD_\lambda$. Note that for $d\in W$ one has $t^{\lambda'}d = (t^\lambda w_\lambda)'d = (t^\lambda w_\lambda d)'$ and therefore,
\begin{equation}\label{weak Bruhat 2}
  d\ge w_{\lambda'} \iff t^{\lambda'}d\in\scrTst(\lambda') \iff t^\lambda w_\lambda d\in \scrTst(\lambda) \iff
  w_\lambda d\in\scrDst_\lambda\,.
\end{equation}
By \cite[Theorem~5.6]{DJ}, for given $\lambda\in\Lambda$, the elements $z_\lambda T_d$ with $w_{\lambda'}\le d\in W$ form an $R$-basis of $S^\lambda$. By the above, this set has cardinality $|\scrTst(\lambda')|=|\scrTst(\lambda)|$, since $t\in\scrT(\lambda)$ is standard if and only if $t'\in\scrT(\lambda')$ is standard. Moreover, \cite[Lemma~5.1]{DJ} states that  if $d\ge w_{\lambda'}$ and if one expands the basis element $z_\lambda T_d$ of $S^\lambda$ in terms of the standard basis of $M^\lambda$, i.e., $z_\lambda T_d = \sum_{e\in\scrD_\lambda} \alpha_{d,e} x_\lambda T_e$, with $\alpha_{d,e}\in R$, then
\begin{equation}\label{Specht property 1}
  \alpha_{d,w_\lambda d} = q^{l(d)}
\end{equation}
and
\begin{equation}\label{Specht property 2}
  \alpha_{d,e}\neq 0 \Rightarrow \bigl(e=w_\lambda d \text{ or } l(e)>l(w_\lambda d)\bigr)\,.
\end{equation}
Thus, if we set $\scrDns_\lambda:=\scrD_\lambda\smallsetminus\scrDst_\lambda$ then the $R$-span of the elements $x_\lambda T_e$, $e\in\scrDns_\lambda$, is a direct complement of $S^\lambda$ in $M^\lambda$, and we state for later reference:
\begin{equation}\label{Specht property 3}
  \parbox{11.2cm}{\it
    For every $\lambda\in\Gamma$, the Specht module $S^\lambda$ is $R$-free of rank $|\scrTst(\lambda)|$ and $S^\lambda$ has an $R$-complement in $M^\lambda$.}
\end{equation}

\end{nothing}


\section{Ascending Generalized Tableaux and Homomorphisms}\label{sec Homhat}

In this section we introduce, for any compositions $\lambda,\mu\in\Gamma$, an $R$-submodule $\HomhatH(M^\mu,M^\lambda)$ of $\Hom_{\scrH}(M^\mu,M^\lambda)$. In Proposition~\ref{ascending composition proposition} we show that for $\varphi\in\HomhatH(M^\mu,M^\lambda)$ and $\psi\in\HomhatH(M^\nu,M^\mu)$ one has $\varphi\circ \psi\in\HomhatH(M^\nu,M^\lambda)$.

\begin{nothing}\label{ascending Tableaux}
Let $\lambda,\mu\in\Gamma$ be compositions of $r$. We say that $T\in\scrT(\lambda,\mu)$ is {\em ascending} if, for every $i\in\NN$, the $i$-th row of $T$ contains only entries which are greater than or equal to $i$. As in \cite{BH}, we denote the set of ascending and row semistandard elements of $\scrT(\lambda,\mu)$ by $\scrThat(\lambda,\mu)$. One has
\begin{equation}\label{ascending existence}
  \scrThat(\lambda,\mu)\neq\emptyset \iff \mu\trianglelefteq \lambda \iff T^\lambda_\mu\in\scrThat(\lambda,\mu)
\end{equation}
and $\scrThat(\lambda,\lambda)=\{T^\lambda_\lambda\}$.

We define $\scrDhat_{\lambda,\mu}$ as the image of $\scrThat(\lambda,\mu)$ under the canonical bijection in (\ref{d-T-bijection}). Moreover, we define
\begin{equation}\label{eqn Homhat definition}
  \HomhatH(M^\mu,M^\lambda):=\bigoplus_{d\in\scrDhat_{\lambda,\mu}} R\varphi_d^{\lambda,\mu} \subseteq
  \Hom_{\scrH}(M^\mu,M^\lambda)\,.
\end{equation}
By (\ref{ascending existence}) we have
\begin{equation*}
  \HomhatH(M^\mu,M^\lambda)\neq\{0\} \Rightarrow \mu\trianglelefteq \lambda\,.
\end{equation*}

For $i\in\NN$, we define the intervals $P_i:=\{\lambda_1+\cdots+\lambda_{i-1}+1,\ldots,\lambda_1+\cdots+\lambda_i\}$ and $Q_i:=\{\mu_1+\cdots+\mu_{i-1}+1,\ldots,\mu_1+\cdots+\mu_i\}$. One has for any $w\in W$:
\begin{align*}
  & \text{$wT^\lambda_\mu$ is ascending}\\
  \iff & \text{$P_i w \subseteq Q_i\cup Q_{i+1}\cup \cdots$ for all $i\ge 1$}\\
  \iff & \text{$(P_i\cup P_{i+1}\cup \cdots) w \subseteq Q_i\cup Q_{i+1}\cup \cdots$ for all $i\ge 1$.}
\end{align*}
We define $W_{\lambda,\mu}$ to be the set of elements $w\in W$ satisfying the above equivalent properties. Clearly, $W_{\lambda,\mu}$ is a union of double cosets in $W_\lambda\backslash W/W_\mu$. The following properties are also immediate from the definition:
\begin{align}\label{ascending W-properties}
\begin{split}
  W_{\lambda,\mu} & \neq\emptyset\quad \text{if and only if $\mu\trianglelefteq\lambda$,}\\
  W_{\lambda,\lambda} & = W_\lambda\,,\\
  W_{\lambda,\mu} W_{\mu,\nu} & \subseteq W_{\lambda,\nu} 
              \quad\text{if $\nu\trianglelefteq\mu\trianglelefteq\lambda$,}\\
  W_\lambda 1 W_\mu & \subseteq W_{\lambda,\mu}\quad \text{if $\mu\trianglelefteq \lambda$,}\\
  W_{\lambda,\mu}\cup W_{\mu,\nu} & \subseteq W_{\lambda,\nu}
              \quad\text{if $\nu\trianglelefteq\mu\trianglelefteq\lambda$.}
\end{split}
\end{align}
\end{nothing}

The goal of this section is the proof of the following proposition.

\begin{proposition}\label{ascending composition proposition}
Let $\lambda,\mu,\nu\in\Gamma$ be compositions of $r$ and let $\alpha\in\HomhatH(M^\nu,M^\mu)$ and $\beta\in\HomhatH(M^\mu,M^\lambda)$. Then $\beta\circ\alpha\in\HomhatH(M^\nu,M^\lambda)$.
\end{proposition}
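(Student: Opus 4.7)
The plan is to reduce to basis elements and track the $T_x$-support of the composition in $\scrH$. By $R$-bilinearity, it suffices to assume $\beta=\varphi_d^{\lambda,\mu}$ and $\alpha=\varphi_e^{\mu,\nu}$ for some $d\in\scrDhat_{\lambda,\mu}$ and $e\in\scrDhat_{\mu,\nu}$, so in particular $d\in W_{\lambda,\mu}$ and $e\in W_{\mu,\nu}$. Applying the third equality of (\ref{explicit permutation homomorphism}) together with $\scrH$-linearity of $\beta$, and using $T_eT_f=T_{ef}$ by (\ref{lengths add property}) for the distinguished coset representative $f$, one obtains
\[
\beta\circ\alpha(x_\nu)=\sum_{w\in W_\lambda dW_\mu}\;\sum_{f\in\scrD_\rho\cap W_\nu}T_wT_{ef},
\]
where $W_\rho=e^{-1}W_\mu e\cap W_\nu$. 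The goal is to show that the $T_x$-support of this element in the standard basis of $\scrH$ lies in $W_{\lambda,\nu}$. Granting this, since the elements $\varphi_f^{\lambda,\nu}(x_\nu)=\sum_{w\in W_\lambda fW_\nu}T_w$ for $f\in\scrD_{\lambda,\nu}$ are $R$-linearly independent with pairwise disjoint $T$-supports, and $W_{\lambda,\nu}$ is the disjoint union of the double cosets $W_\lambda fW_\nu$ for $f\in\scrDhat_{\lambda,\nu}$, the coefficients of $\varphi_f^{\lambda,\nu}$ for $f\notin\scrDhat_{\lambda,\nu}$ in the expansion of $\beta\circ\alpha$ must vanish, yielding $\beta\circ\alpha\in\HomhatH(M^\nu,M^\lambda)$.

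The crux is the combinatorial closure property: \emph{if $w\in W_{\mu,\nu}$ and $w\trianglelefteq u$, then $u\in W_{\mu,\nu}$}. I would establish it by reformulating $W_{\mu,\nu}=\{w\in W:(k)w\ge c_k\text{ for all }k\in\{1,\dots,r\}\}$, where $c_k:=\nu_1+\cdots+\nu_{i-1}+1$ whenever $k$ lies in the $i$-th $\mu$-interval; the sequence $(c_k)$ is non-decreasing since the $\mu$-row index $i_\mu(k)$ is. By the standard reflection characterization of strong Bruhat order, $w\trianglelefteq u$ is equivalent to $u$ being reachable from $w$ by a sequence of right multiplications by reflections, each step decreasing length. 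For a single such step $t=(i,j)$ with $i<j$ and $l(wt)<l(w)$ (equivalently $(i)w>(j)w$), one verifies $(i)(wt)=(j)w\ge c_j\ge c_i$ and $(j)(wt)=(i)w>(j)w\ge c_j$, while other coordinates are preserved; hence $wt\in W_{\mu,\nu}$, and iteration closes the argument.

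Granting the closure lemma, the remainder is routine. From $e\in W_{\mu,\nu}$, $f\in W_\nu=W_{\nu,\nu}$, and $W_{\mu,\nu}W_{\nu,\nu}\subseteq W_{\mu,\nu}$ (specializing (\ref{ascending W-properties})), one gets $ef\in W_{\mu,\nu}$. By (\ref{Bruhat property}), $T_wT_{ef}$ is supported on $\{T_{wu}:ef\trianglelefteq u\}$, so by the closure lemma every such $u$ lies in $W_{\mu,\nu}$. Since $w\in W_\lambda dW_\mu\subseteq W_{\lambda,\mu}$ (the latter being a union of double cosets containing $d$), one concludes $wu\in W_{\lambda,\mu}\cdot W_{\mu,\nu}\subseteq W_{\lambda,\nu}$ by (\ref{ascending W-properties}). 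The main obstacle is the closure lemma; it fails for arbitrary coordinate lower-bound systems, and the non-decreasing property of $(c_k)$ built into the definition of $W_{\mu,\nu}$ is what makes the verification go through.
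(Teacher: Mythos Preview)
Your overall strategy coincides with the paper's: reduce to basis homomorphisms, show that $(\beta\circ\alpha)(x_\nu)$ has $T$-support contained in $W_{\lambda,\nu}$, and then compare against the double-coset decomposition of the $\varphi_f^{\lambda,\nu}(x_\nu)$ to kill the coefficients with $f\notin\scrDhat_{\lambda,\nu}$. The crucial ingredient---closure of $W_{\mu,\nu}$ under passage to Bruhat-smaller elements---is exactly the paper's Lemma~\ref{ascending lemma 2}. Your proof of that lemma, however, is different and more direct: you recast membership in $W_{\mu,\nu}$ as a system of coordinate lower bounds $(k)w\ge c_k$ with $(c_k)$ non-decreasing, and check stability under a single reflection step. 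The paper instead first proves a factorization lemma (Lemma~\ref{ascending lemma 1}): any $w\in W_{\lambda,\mu}$ with $w=sv$, $s\in S$, $l(v)=l(w)-1$, factors through an intermediate composition $\rho$ with $s\in W_{\lambda,\rho}$ and $v\in W_{\rho,\mu}$; closure then follows by induction on $l(w)$. Your route bypasses this auxiliary $\rho$, at the price of invoking the reflection characterization of Bruhat order rather than only the subword definition.

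One slip to fix: with the paper's right-action convention $(k)(wt)=((k)w)t$, your formulas $(i)(wt)=(j)w$, $(j)(wt)=(i)w$, and the equivalence $l(wt)<l(w)\Leftrightarrow (i)w>(j)w$ are all statements about the \emph{left} product $tw$, not $wt$. Either switch to left reflections throughout (the Bruhat order is equally well characterized by chains of left reflections, so the reduction is unaffected), or redo the single-step check for genuine right multiplication: with $t=(i,j)$, $i<j$, and $l(wt)<l(w)$, set $a=(i)w^{-1}$, $b=(j)w^{-1}$, so that $a>b$; then $(b)(wt)=i=(a)w\ge c_a\ge c_b$ and $(a)(wt)=j>i=(a)w\ge c_a$, while all other coordinates are unchanged. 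Either way the closure lemma stands and your argument is complete.
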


Before we can prove the proposition, we need three lemmas.

\begin{lemma}\label{ascending lemma 1}
Let $\lambda,\mu\in\Gamma$ be compositions of $r$ and let $w\in W_{\lambda,\mu}$ (in particular, $\mu\trianglelefteq\lambda$). If $w=sv$ with $s\in S$, $v\in W$ and $l(w)=l(v)+1$ then there exists $\rho\in\Gamma$ such that $\mu\trianglelefteq\rho\trianglelefteq\lambda$, $s\in W_{\lambda,\rho}$ and $v\in W_{\rho,\mu}$.
\end{lemma}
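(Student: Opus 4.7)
The plan is to construct $\rho$ as an explicit intermediate shape between $\lambda$ and $\mu$ adapted to $s=(j,j+1)$. For any composition it is convenient to introduce the weakly increasing ``row-index'' function $c_\lambda\colon\{1,\ldots,r\}\to\NN$ defined by $c_\lambda(k)=i$ iff $k\in P_i$, and analogously $c_\mu$ and $c_\rho$. In this language, $w\in W_{\lambda,\mu}$ becomes $c_\mu(aw)\ge c_\lambda(a)$ for all $a$, while the three conclusions translate to: $c_\rho$ must be weakly increasing (so as to come from a composition $\rho$), $c_\lambda(bs)\le c_\rho(b)$ for all $b$ (equivalent to $s\in W_{\lambda,\rho}$), and $c_\rho(b)\le c_\mu(bv)$ for all $b$ (equivalent to $v\in W_{\rho,\mu}$), together with the two dominance relations.

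The construction splits according to whether $s\in W_\lambda$. If $j$ and $j+1$ lie in the same $\lambda$-block, take $\rho:=\lambda$; then $s\in W_{\lambda,\lambda}$ and $c_\lambda\circ s=c_\lambda$, and since $v=sw$, checking $v\in W_{\lambda,\mu}$ at $b=j,\,j+1$ and at $b\notin\{j,j+1\}$ is immediate from $w\in W_{\lambda,\mu}$ (the length hypothesis is not needed here). If $s\notin W_\lambda$, set $p:=c_\lambda(j)<c_\lambda(j+1)=:q$; since $j$ and $j+1$ are consecutive integers and blocks of $\lambda$ are intervals, the intermediate blocks $P_{p+1},\ldots,P_{q-1}$ must be empty. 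Define $\rho$ by moving the last box of row $p$ of the $\lambda$-diagram down to row $q$, i.e.\ $\rho_p:=\lambda_p-1$, $\rho_q:=\lambda_q+1$, and $\rho_i:=\lambda_i$ otherwise; equivalently $c_\rho$ agrees with $c_\lambda$ except that $c_\rho(j)=q$. Weak monotonicity of $c_\rho$ uses the vanishing of the intermediate blocks, and both $s\in W_{\lambda,\rho}$ and $\rho\trianglelefteq\lambda$ follow from direct partial-sum comparisons.

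The main obstacle is verifying $v\in W_{\rho,\mu}$ and $\mu\trianglelefteq\rho$, where the length hypothesis enters essentially. The assumption $l(sv)=l(v)+1$ unpacks, via inverting and applying the length formula of \S\ref{Hecke algebra}, to $jv<(j+1)v$, that is, $(j+1)w<jw$; weak monotonicity of $c_\mu$ then yields the key inequality $c_\mu(jw)\ge c_\mu((j+1)w)\ge c_\lambda(j+1)=q$. At $b=j+1$, where $(j+1)v=jw$, this reads $c_\mu((j+1)v)\ge q=c_\rho(j+1)$, completing $v\in W_{\rho,\mu}$. The same estimate yields the sharpening needed for $\mu\trianglelefteq\rho$: since $\sum_{i\le e}\rho_i=\sum_{i\le e}\lambda_i-1$ precisely for $p\le e<q$, one must improve the inequality $|B_e|\le|A_e|$ by one in that range, where $A_e:=\{k:c_\lambda(k)\le e\}$ and $B_e:=\{m:c_\mu(m)\le e\}$. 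From $w\in W_{\lambda,\mu}$ one has $w^{-1}(B_e)\subseteq A_e$, and the estimate $c_\mu(jw)\ge q>e$ excludes $j$ from $w^{-1}(B_e)$, giving $|B_e|\le|A_e|-1$ as required.
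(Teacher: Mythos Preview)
Your argument is correct and follows essentially the same route as the paper: the same case split on whether $s\in W_\lambda$, the same construction of $\rho$ by moving the box at position $j$ from row $p$ to row $q$, and the same key use of $jw>(j+1)w$ (from the length hypothesis) to control the image of $j$ under $w$. The only differences are cosmetic: you phrase everything via the row-index functions $c_\lambda,c_\mu,c_\rho$ instead of the interval containments $(P_l\cup P_{l+1}\cup\cdots)w\subseteq Q_l\cup Q_{l+1}\cup\cdots$, and you verify $\mu\trianglelefteq\rho$ directly by a partial-sum count, whereas the paper simply observes that $v\in W_{\rho,\mu}\neq\emptyset$ already forces $\mu\trianglelefteq\rho$ by the listed properties of the sets $W_{\lambda,\mu}$.
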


\begin{proof}
Write $s=(i,i+1)$ with $i\in\{1,\ldots,r-1\}$ and note that $iw>(i+1)w$, since $l(w)>l(sw)$. Let $P_1,P_2,\ldots$ and $Q_1,Q_2,\ldots$ be the subsets of $\{1,\ldots,r\}$ associated to $\lambda$ and $\mu$, respectively, as above. If there exists $j\in \NN$ with $\{i,i+1\}\in P_j$ then $s\in W_\lambda$ and $\rho:=\lambda$ satisfies the required conditions, since $W_\lambda=W_{\lambda,\lambda}$ and $v=sw\in W_\lambda W_{\lambda,\mu}=W_{\lambda,\mu}$. Hence, we may assume that there exist positive integers $j<k$ such that $i\in P_j$ and $i+1\in P_k$ (we need to allow the possibility that $P_{j+1}=\cdots=P_{k-1}=\emptyset$). Set $P'_l:=P_l$ for $l\notin\{j,k\}$, and set $P'_j:=P_j\smallsetminus\{i\}$ and $P'_k:=P_k\cup\{i\}$. Moreover, let $\rho\in\Gamma$ be the composition defined by $P'_1,P'_2,\ldots$. Then clearly, $\rho\trianglelefteq\lambda$ and $s\in W_{\lambda,\rho}$. We still need to show that $v\in W_{\rho,\mu}$. This also implies $\mu\trianglelefteq\rho$ by (\ref{ascending W-properties}). For $l\notin\{j+1,\ldots,k\}$ we have $P'_l\cup P'_{l+1}\cup\cdots = P_l\cup P_{l+1}\cup\cdots$ and $(P'_l\cup P'_{l+1}\cup\cdots)v=(P_l\cup P_{l+1}\cup\cdots)v=(P_l\cup P_{l+1}\cup\cdots)sv = (P_l\cup P_{l+1}\cup\cdots)w\subseteq Q_l\cup Q_{l+1}\cup\cdots$. For $l\in\{j+1,\ldots,k\}$ we have $i+1\in P'_l\cup P'_{l+1}\cup\cdots = (P_l\cup P_{l+1}\cup\cdots)\cup\{i\}$  and $(P'_l\cup P'_{l+1}\cup\cdots)v = (\{i\}\cup P_l\cup P_{l+1}\cup\cdots)v = (\{i\}\cup P_l\cup P_{l+1}\cup\cdots)sv = (\{i\}\cup P_l\cup P_{l+1}\cup\cdots)w \subseteq \{iw\}\cup Q_l\cup Q_{l+1}\cup\cdots$, since $w\in W_{\lambda,\mu}$. It suffices to show that $iw\in Q_l\cup Q_{l+1}\cup\cdots$. But since $w\in W_{\lambda,\mu}$, we have $(i+1)w\in (P_k\cup P_{k+1}\cup\cdots)w\subseteq Q_k\cup Q_{k+1}\cup\cdots \subseteq Q_l\cup Q_{l+1}\cup\cdots$, and $iw>(i+1)w$ now implies also $iw\in Q_l\cup Q_{l+1}\cup\cdots$. This completes the proof of the lemma.
\end{proof}

\begin{lemma}\label{ascending lemma 2}
Let $\lambda,\mu\in\Gamma$ and let $w\in W_{\lambda,\mu}$. If $u\in W$ satisfies $u\trianglerighteq w$ then also $u\in W_{\lambda,\mu}$.
\end{lemma}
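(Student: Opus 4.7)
The plan is to reduce to a single-step claim using the chain property of the strong Bruhat order, then verify that case by a direct combinatorial computation. By the chain property (in the paper's convention: if $u \trianglerighteq w$, then there is a chain $w = w_0, w_1, \ldots, w_m = u$ in $W$ with each $w_{k+1} = w_k t_k$ for a transposition $t_k$ and $l(w_{k+1}) = l(w_k) - 1$), induction on $m$ reduces the proof to the following: \emph{if $w \in W_{\lambda,\mu}$ and $t = (c,d)$ is a transposition with $1 \le c < d \le r$ and $l(wt) = l(w) - 1$, then $wt \in W_{\lambda,\mu}$.}

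For the single-step claim, set $u := wt$ and let $i_1, i_2 \in \{1, \ldots, r\}$ be determined by $i_1 w = c$ and $i_2 w = d$. Since $l(wt) < l(w)$, we have $i_2 < i_1$ (i.e., $d$ appears to the left of $c$ in the one-line notation of $w$). For each $i \ge 1$, the image $(P_i \cup P_{i+1} \cup \cdots)u$ is obtained from $(P_i \cup P_{i+1} \cup \cdots)w$ by swapping the values $c$ and $d$, and this swap only alters the set when exactly one of $c, d$ lies in it. In view of $i_2 < i_1$, this happens precisely in the range $i_2 \le a_i < i_1$ (with $a_i := \lambda_1 + \cdots + \lambda_{i-1}$), where $c$ is in the set and $d$ is not. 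Since $w \in W_{\lambda,\mu}$, one has $c \in (P_i \cup P_{i+1} \cup \cdots) w \subseteq Q_i \cup Q_{i+1} \cup \cdots$, so $c > b_i := \mu_1 + \cdots + \mu_{i-1}$; combined with $c < d$, this forces $d > b_i$, i.e., $d \in Q_i \cup Q_{i+1} \cup \cdots$. Hence the swap keeps the image set inside $Q_i \cup Q_{i+1} \cup \cdots$, establishing $u \in W_{\lambda,\mu}$.

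The main (modest) obstacle is length-direction bookkeeping: the argument crucially uses $c < d$ together with $c \in Q_i \cup Q_{i+1} \cup \cdots$ to force the larger value $d$ into the image set, whereas in the length-\emph{increasing} case the roles reverse and the smaller value $c$ would enter, which can fail the required inclusion (small examples in $W = S_3$ show that $W_{\lambda, \mu}$ is generally not closed under right-multiplication by arbitrary transpositions, so the length hypothesis is essential). A one-stroke alternative uses Ehresmann's tableau criterion for the Bruhat order: the condition $w \in W_{\lambda,\mu}$ rewrites as the rank equality $|\{j \le a_i : jw \le b_i\}| = b_i$ (the maximum possible value for this count), and the criterion then gives $|\{j \le a_i : ju \le b_i\}| \ge b_i$ for any $u \trianglerighteq w$, forcing equality and hence $u \in W_{\lambda,\mu}$.
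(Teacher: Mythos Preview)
Your proof is correct and takes a genuinely different route from the paper's. The paper argues by induction on $l(w)$: it writes $w = sv$ with $s \in S$ simple and $l(v) = l(w) - 1$, invokes the preceding Lemma~\ref{ascending lemma 1} to manufacture an intermediate composition $\rho$ with $\mu \trianglelefteq \rho \trianglelefteq \lambda$, $s \in W_{\lambda,\rho}$ and $v \in W_{\rho,\mu}$, and then uses the standard dichotomy ($u \trianglerighteq v$ or $su \trianglerighteq v$) together with the multiplicativity $W_{\lambda,\rho}W_{\rho,\mu} \subseteq W_{\lambda,\mu}$ and the containment $W_{\rho,\mu} \subseteq W_{\lambda,\mu}$ from (\ref{ascending W-properties}). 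By contrast, you descend along a saturated Bruhat chain via \emph{right} multiplication by arbitrary transpositions and verify the defining inclusion $(P_i \cup P_{i+1} \cup \cdots)u \subseteq Q_i \cup Q_{i+1} \cup \cdots$ directly; the crux---that in the only nontrivial case the swap replaces $c$ by the larger value $d$, never the reverse---is exactly what the length hypothesis $l(wt) < l(w)$ buys, as you note. Your argument is shorter and self-contained: in the paper, Lemma~\ref{ascending lemma 1} is used nowhere except to feed into the present proof, so your approach would allow it to be dropped entirely. Your one-line alternative via the tableau (Ehresmann) criterion is also valid and is perhaps the most conceptual formulation: $w \in W_{\lambda,\mu}$ says precisely that the rank counts $|\{j \le a_i : jw \le b_i\}|$ are maximal (equal to $b_i$) at the grid points $(a_i,b_i)$, and passing to a Bruhat-smaller element can only increase these counts.
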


\begin{proof}
We proceed by induction on $l(w)$. Note that $\mu\trianglelefteq\lambda$, since $W_{\lambda,\mu}\neq\emptyset$. If $w=1$ we have $u=1$ and $u=w\in W_{\lambda,\mu}$. Now assume that $l(w)\ge 1$ and write $w=sv$ with $l(w)=l(v)+1$ and $s\in S$. By Lemma~\ref{ascending lemma 1} there exists $\rho\in\Gamma$ such that $\mu\trianglelefteq\rho\trianglelefteq\lambda$, $s\in W_{\lambda,\rho}$ and $v\in W_{\rho,\mu}$. Since 
$u\trianglerighteq w=sv$ and $l(w)>l(v)$, we have $u\trianglerighteq v$ or $su\trianglerighteq v$. If $u\trianglerighteq v$ then $l(u)\le l(v)<l(w)$ and, by induction, $v\in W_{\rho,\mu}$ implies $u\in W_{\rho,\mu}$. But $W_{\rho,\mu}\subseteq W_{\lambda,\mu}$ by (\ref{ascending W-properties}). If $su\trianglerighteq v$ then, again by induction, $v\in W_{\rho,\mu}$ implies $su\in W_{\rho,\mu}$, and further, $u=s(su)\in W_{\lambda,\rho}W_{\rho,\mu}\subseteq W_{\lambda,\mu}$. Now the proof is complete.
\end{proof}

\begin{lemma}\label{ascending lemma 3}
Let $\lambda,\mu,\nu\in\Gamma$, $v\in W_{\lambda,\mu}$, and $w\in W_{\mu,\nu}$. Then $T_vT_w\in\sum_{u\in W_{\lambda,\nu}} RT_u$.
\end{lemma}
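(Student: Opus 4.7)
The plan is to combine the Bruhat-type expansion in~(\ref{Bruhat property}) with the fact that the sets $W_{\lambda,\mu}$ are closed upwards in the strong Bruhat order (Lemma~\ref{ascending lemma 2}) and the multiplicative containment $W_{\lambda,\mu}W_{\mu,\nu}\subseteq W_{\lambda,\nu}$ from~(\ref{ascending W-properties}). I expect this to be a short, essentially one-step argument; there should be no serious obstacle since the three preceding lemmas have already done the hard work.

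More precisely, I would start by applying~(\ref{Bruhat property}) to write
\begin{equation*}
  T_v T_w \;=\; \sum_{u\in W,\; u\trianglerighteq w} r_u\, T_{vu}
\end{equation*}
for some coefficients $r_u\in R$. Each index $u$ in this sum satisfies $u\trianglerighteq w$, and since $w\in W_{\mu,\nu}$ by hypothesis, Lemma~\ref{ascending lemma 2} (applied with the pair $(\mu,\nu)$ in place of $(\lambda,\mu)$) yields $u\in W_{\mu,\nu}$. Combined with $v\in W_{\lambda,\mu}$ and the third line of~(\ref{ascending W-properties}), this gives $vu\in W_{\lambda,\mu}W_{\mu,\nu}\subseteq W_{\lambda,\nu}$.

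Reindexing the sum by $x:=vu$ (which is injective in $u$ for fixed $v$), we obtain $T_vT_w\in\sum_{x\in W_{\lambda,\nu}} R\, T_x$, which is precisely the conclusion of the lemma.

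The one point worth double-checking while writing this out is that Lemma~\ref{ascending lemma 2} is indeed stated for arbitrary pairs of compositions (it is), so we may freely apply it with $(\mu,\nu)$ rather than with $(\lambda,\mu)$; and that the transitivity/inclusion statements of~(\ref{ascending W-properties}) require $\nu\trianglelefteq\mu\trianglelefteq\lambda$, which is automatic here because $W_{\lambda,\mu}\ni v$ and $W_{\mu,\nu}\ni w$ are nonempty.
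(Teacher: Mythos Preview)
Your proof is correct and follows exactly the same route as the paper's own proof, which simply cites Equation~(\ref{Bruhat property}), Lemma~\ref{ascending lemma 2}, and the inclusion $W_{\lambda,\mu}W_{\mu,\nu}\subseteq W_{\lambda,\nu}$ from~(\ref{ascending W-properties}). Your version is just a more detailed unpacking of the same three ingredients, including the useful observation that the dominance hypotheses $\nu\trianglelefteq\mu\trianglelefteq\lambda$ are automatic from nonemptiness.
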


\begin{proof}
This follows immediately from Equation~(\ref{Bruhat property}), Lemma~\ref{ascending lemma 2} and $W_{\lambda,\mu}W_{\mu,\nu}\subseteq W_{\lambda,\nu}$.
\end{proof}

\noindent
{\bf Proof of Proposition~\ref{ascending composition proposition}}\quad
By the definition in (\ref{eqn Homhat definition}) we may assume that $\alpha=\varphi_d^{\mu,\nu}$ and $\beta=\varphi_e^{\lambda,\mu}$ for some $d\in\scrDhat_{\mu,\nu}$ and $e\in\scrDhat_{\lambda,\mu}$. By Equation~(\ref{explicit permutation homomorphism}) and since $d\in W_{\mu,\nu}$ and $e\in W_{\lambda,\mu}$, we have $\varphi_d^{\mu,\nu}(x_\nu) \subseteq x_\mu\cdot\sum_{w\in W_{\mu,\nu}} RT_w$ and $\varphi_e^{\lambda,\mu}(x_\mu)\subseteq \sum_{v\in W_{\lambda,\mu}} RT_v$. Together with Lemma~\ref{ascending lemma 3} this implies
\begin{align*}
  (\varphi_e^{\lambda,\mu}\circ\varphi_d^{\mu,\nu})(x_\nu) 
  & \in \varphi_e^{\lambda,\mu}(x_\mu)\cdot \sum_{w\in W_{\mu,\nu}} RT_w
  \subseteq \sum_{v\in W_{\lambda,\mu} \atop w\in W_{\mu,\nu}}RT_vT_w\\
  &\subseteq\sum_{u\in W_{\lambda,\nu}} RT_u = \sum_{f\in\scrDhat_{\lambda,\nu}} 
  \sum_{u\in W_\lambda f W_\nu} RT_u\,.
\end{align*}
Since the elements $\varphi_f^{\lambda,\nu}$, $f\in\scrD_{\lambda,\nu}$, form an $R$-basis of $\Hom_{\scrH}(M^\nu,M^\lambda)$ we can write $\varphi_e^{\lambda,\mu}\circ\varphi_d^{\mu,\nu} = \sum_{f\in\scrD_{\lambda,\nu}} a_f\varphi_f^{\lambda,\nu}$ with $a_f\in R$, for $f\in \scrD_{\lambda,\nu}$. Thus,
\begin{equation*}
  (\varphi_e^{\lambda,\mu}\circ\varphi_d^{\mu,\nu})(x_\nu) 
  = \sum_{f\in\scrD_{\lambda,\nu}} a_f \varphi_f^{\lambda,\nu}(x_\nu)
  = \sum_{f\in\scrD_{\lambda,\nu}} a_f \sum_{u\in W_\lambda f W_\nu}T_u\,.
\end{equation*}
Comparing this with the above yields $a_f=0$ for all $f\in\scrD_{\lambda,\nu}\smallsetminus\scrDhat_{\lambda,\nu}$, and the proof is complete.\qed


\section{The Chain Complex $\Ctilde^\lambda_*$}

Throughout this section we fix a composition $\lambda\in\Gamma$. We will use the $R$-modules $\HomhatH(M^\mu,M^\lambda)$ from Section~\ref{sec Homhat} and the result from Proposition~\ref{ascending composition proposition} to construct a chain complex $\Ctilde_*^\lambda$ of left $\scrH$-modules. In the case $q=1$, this chain complex coincides with the chain complex constructed in \cite{BH} for the symmetric group algebra.

\begin{nothing}\label{chain complex definition} {\em The definition of $\Ctilde^\lambda_*$.}\quad
For every strictly ascending chain
\begin{equation}\label{chain}
  \gamma = (\lambda^{(0)}\triangleleft\cdots\triangleleft\lambda^{(n)})
\end{equation}
of {\em length} $n$ in $\Gamma$ we set
\begin{equation*}
\begin{split}
  M_\gamma:= \ & \HomhatH(M^{\lambda^{(0)}},M^{\lambda^{(1)}}) \otimes
                           \HomhatH(M^{\lambda^{(1)}},M^{\lambda^{(2)}}) \otimes \cdots \\
             & \cdots \otimes \HomhatH(M^{\lambda^{(n-1)}}, M^{\lambda^{(n)}}) \otimes 
                           \Hom(M^{\lambda^{(n)}},R)\,.
\end{split}
\end{equation*}
We view $M_\gamma$ as left $\scrH$-module via
\begin{equation*}
  h(\varphi_1\otimes\cdots\otimes\varphi_n\otimes\varepsilon) :=
  \varphi_1\otimes\cdots\otimes\varphi_n\otimes h\varepsilon\,,
\end{equation*}
for $h\in\scrH$, $\varphi_i\in\HomhatH(M^{\lambda^{(i-1)}},M^{\lambda^{(i)}})$, $i=1,\ldots,n$, and $\varepsilon\in\Hom(M^{\lambda^{(n)}},R)$, where $(h\varepsilon)(m):=\varepsilon(mh)$ for $m\in M^{\lambda^{(n)}}$.

For every integer $n\ge 0$, we write $\Delta_n^\lambda$ for the set of chains $\gamma$ as in (\ref{chain}) with $\lambda^{(0)}=\lambda$. Further, we define the left $\scrH$-module
\begin{equation*}
  C_n^\lambda:= \bigoplus_{\gamma\in \Delta^\lambda_n} M_\gamma\,.
\end{equation*}
For $n\ge 1$, $\gamma\in\Delta_n^\lambda$ and $i\in\{1,\ldots,n\}$, we denote by $\gamma_i\in \Delta_{n-1}^\lambda$ the chain obtained from $\gamma$ by omitting $\lambda^{(i)}$. We define an $\scrH$-module homomorphism
\begin{equation*}
  d_{n,i}^\lambda\colon M_\gamma\to M_{\gamma_i}
\end{equation*}
by
\begin{equation*}
  \varphi_1\otimes\cdots\otimes\varphi_n\otimes\varepsilon\mapsto
  \varphi_1\otimes\cdots\otimes\varphi_{i+1}\circ\varphi_i\otimes\cdots\otimes \varphi_n\otimes\varepsilon\,,
\end{equation*}
where $\varphi_{i+1}$ is interpreted as $\varepsilon$ if $i=n$. The direct sum of these homomorphisms yields an $\scrH$-module homomorphism $d_{n,i}^\lambda\colon C_n^\lambda\to C_{n-1}^\lambda$ and we set
\begin{equation*}
  d_n^\lambda:=\sum_{i=1}^n (-1)^{i-1} d_{n,i}^\lambda\colon C_n^\lambda\to C_{n-1}^\lambda\,.
\end{equation*}
Since the maps $d_{n,i}^\lambda\colon C_n^\lambda\to C_{n-1}^\lambda$ satisfy the usual simplical relations, we obtain $d_n^\lambda\circ d_{n+1}^\lambda =0$ for $n\ge 1$. Thus, we have constructed a chain complex
\begin{equation*}
  C_*^\lambda\colon\quad 0\ar C_{a(\lambda)}^\lambda \Ar{d_{a(\lambda)}^\lambda} C_{a(\lambda)-1}^\lambda
  \Ar{d_{a(\lambda)-1}^\lambda}\cdots\Ar{d_1^\lambda} C_0^\lambda \ar 0
\end{equation*}
of finitely generated left $\scrH$-modules and $\scrH$-module homomorphisms. Here, $a(\lambda)$ is defined as the length of the longest possible strictly ascending chain $\gamma$ as in (\ref{chain}) with $\lambda^{(0)}=\lambda$, in other words, $a(\lambda)$ is the largest integer $n$ with $\Delta_n^\lambda\neq\emptyset$.

Note that the definition of $d_n^\lambda$ implies immediately that
\begin{equation}\label{d-recursion}
\begin{split}
  d_n^{\lambda^{(0)}}(\varphi_1\otimes\cdots\otimes\varphi_n\otimes\varepsilon) = 
  &\ (\varphi_2\circ\varphi_1)\otimes\cdots\otimes\varphi_n\otimes\varepsilon\\
  &\ - \varphi_1\otimes d_{n-1}^{\lambda^{(1)}}(\varphi_2\otimes\cdots\otimes
  \varphi_n\otimes\varepsilon)\,,
\end{split}
\end{equation}
for $n\ge 2$, $\gamma$ as in (\ref{chain}), $\varphi_i\in\HomhatH(M^{\lambda^{(i-1)}},M^{\lambda^{(i)}})$, $i=1,\ldots,n$, and $\varepsilon\in\Hom(M^{\lambda^{(n)}},R)$.

Finally, we extend the chain complex $C_*^\lambda$ by the $\scrH$-module homomorphism
\begin{equation*}
  d_0^\lambda\colon C_0^\lambda = \Hom(M^\lambda,R) \to \Hom(S^\lambda,R)=:C_{-1}^\lambda\,,\quad
  \varepsilon\mapsto\varepsilon|_{S^\lambda}\,,
\end{equation*}
and obtain a chain complex
\begin{equation*}
  \Ctilde_*^\lambda\colon\quad   
  0\ar C_{a(\lambda)}^\lambda \Ar{d_{a(\lambda)}^\lambda} C_{a(\lambda)-1}^\lambda
  \Ar{d_{a(\lambda)-1}^\lambda}\cdots\Ar{d_1^\lambda} C_0^\lambda \Ar{d_0^\lambda} C_{-1}^\lambda\ar 0
\end{equation*}
in the category of finitely generated left $\scrH$-modules. In fact, in the next proposition we show that $d_0^\lambda\circ d_1^\lambda=0$. Note that $C_{-1}^\lambda=0$ if $\lambda$ is not a partition. Also note that every $\scrH$-module in the chain complex $\Ctilde_*^\lambda$ is finitely generated and free as $R$-module so that after applying the functor $\Hom(-,R)$ we obtain a chain complex starting with $0\to S^\lambda\to M^\lambda\to\cdots$ and involving direct sums of modules of the form $M^\mu$ with $\mu\triangleright \lambda$ from there on.
\end{nothing}

\begin{proposition}\label{d0d1=0}
With the notation from \ref{chain complex definition} one has $d_0^\lambda\circ d_1^\lambda=0$.
\end{proposition}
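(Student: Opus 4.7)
The plan is to unpack $d_0^\lambda \circ d_1^\lambda$ on each direct summand of $C_1^\lambda$ and reduce the identity to a Hom-vanishing statement about Specht modules. A summand of $C_1^\lambda$ has the form $M_\gamma = \HomhatH(M^\lambda, M^\mu) \otimes \Hom(M^\mu, R)$ for some $\mu \in \Gamma$ with $\lambda \triangleleft \mu$. On a simple tensor $\varphi \otimes \varepsilon$, the formula for $d_1^\lambda$ (with $n = i = 1$, so that the term $\varphi_{i+1}$ is interpreted as $\varepsilon$) reduces to the single term $\varepsilon \circ \varphi \in \Hom(M^\lambda, R) = C_0^\lambda$, and applying $d_0^\lambda$ yields $(\varepsilon \circ \varphi)|_{S^\lambda} = \varepsilon \circ (\varphi|_{S^\lambda})$.

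Because $M^\mu$ is $R$-free of finite rank, $\Hom(M^\mu, R)$ separates points of $M^\mu$, so $\varepsilon \circ (\varphi|_{S^\lambda}) = 0$ for every $\varepsilon$ if and only if $\varphi(S^\lambda) = 0$. This reduces the proposition to the claim that $\varphi(S^\lambda) = 0$ for every $\mu \in \Gamma$ with $\mu \triangleright \lambda$ strictly and every $\varphi \in \HomhatH(M^\lambda, M^\mu)$. If $\lambda$ is not a partition then $S^\lambda = 0$ by convention and there is nothing to show, so I may assume $\lambda \in \Lambda$, in which case $\varphi|_{S^\lambda}$ is a well-defined element of $\Hom_{\scrH}(S^\lambda, M^\mu)$.

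Next I would invoke the $q$-analogue of James' semistandard basis theorem due to Dipper and James: this asserts that $\Hom_{\scrH}(S^\lambda, M^\mu)$ is a free $R$-module with basis parametrized by $\scrTst(\lambda, \mu)$. A short pigeonhole argument shows $\scrTst(\lambda, \mu) = \emptyset$ whenever $\mu \triangleright \lambda$ strictly: the strict increase down columns forces the entry in row $r$ of each column to be at least $r$, so every entry $\le k$ occupies one of the first $k$ rows, giving $\mu_1 + \cdots + \mu_k \le \lambda_1 + \cdots + \lambda_k$ for all $k$, i.e.\ $\mu \trianglelefteq \lambda$, contradicting the strict dominance $\mu \triangleright \lambda$. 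Hence $\varphi|_{S^\lambda} = 0$ and the proposition follows.

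The principal obstacle will be pinning down the Dipper--James Hom-vanishing result in the required generality and convention; one wants $\Hom_{\scrH}(S^\lambda, M^\mu) = 0$ in the paper's dominance convention (which is opposite to that used in \cite{DJ}, as highlighted after (\ref{DJ dominance})) and over an arbitrary integral domain $R$, so the usual PID hypothesis would need to be stripped away much as in the remark following (\ref{explicit permutation homomorphism}). If one prefers a self-contained proof, an alternative route is to compute $\varphi_T^{\mu,\lambda}(z_\lambda) = \varphi_T^{\mu,\lambda}(x_\lambda)\, T_{w_\lambda}\, y_{\lambda'}$ directly for each $T \in \scrThat(\mu, \lambda)$ and exhibit, using the ascending condition together with the $q$-sign action $T_s y_{\lambda'} = -y_{\lambda'}$ for appropriate $s \in W_{\lambda'}$, a column transposition that annihilates the right-hand side whenever $\mu$ strictly dominates $\lambda$; this is closer in spirit to the Garnir-relation argument used in \cite{BH} but is notably more calculational.
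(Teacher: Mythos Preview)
Your reduction is exactly the paper's: both arguments reduce to showing that every $\scrH$-homomorphism $M^\lambda\to M^\mu$ with $\lambda\triangleleft\mu$ annihilates $S^\lambda$ (and in fact both proofs establish this for all of $\Hom_\scrH(M^\lambda,M^\mu)$, not just the ascending part). The divergence is only in the last step. You appeal to the Dipper--James semistandard basis theorem for $\Hom_\scrH(S^\lambda,M^\mu)$ together with the pigeonhole observation $\scrTst(\lambda,\mu)=\emptyset$. The paper instead writes $\varphi_d^{\mu,\lambda}(z_\lambda)=\varphi_d^{\mu,\lambda}(x_\lambda)\,T_{w_\lambda}\,y_{\lambda'}\in x_\mu\scrH y_{\lambda'}$ and invokes the much lighter \cite[Lemma~4.1]{DJ}, which says $x_\mu T_w y_{\lambda'}\neq 0$ forces $\lambda''\ge\mu$ in the sense of (\ref{DJ dominance}); for a partition $\lambda$ this unwinds to $\mu\trianglelefteq\lambda$, contradicting $\lambda\triangleleft\mu$. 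This route sidesteps precisely the issues you flag---the convention mismatch and the PID hypothesis---since Lemma~4.1 is an elementary identity in $\scrH$ valid over any $R$. Your ``alternative route'' at the end is in fact very close to what the paper does, except that the paper replaces your proposed explicit Garnir-type calculation with this single black-box vanishing of $x_\mu\scrH y_{\lambda'}$.
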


\begin{proof}
We may assume that $\lambda$ is a partition, since otherwise $C_{-1}^\lambda=0$. Let $\lambda\triangleleft\mu$ be a chain of length $1$ in $\Gamma$ and let $d\in\scrD_{\mu,\lambda}$. It suffices to show that $\varphi_d^{\mu,\lambda}(S^\lambda)=0$. Since $S^\lambda=z_\lambda\scrH$, it suffices to show that $\varphi_d^{\mu,\lambda}(z_\lambda)=0$. But,
\begin{equation*}
  \varphi_d^{\mu,\lambda}(z_\lambda) = \varphi_d^{\mu,\lambda}(x_\lambda T_{w_\lambda} y_{\lambda'})
  = \varphi_d^{\mu,\lambda}(x_\lambda) T_{w_\lambda} y_{\lambda'} \in x_\mu\scrH y_{\lambda'}
\end{equation*}
and it suffices to show that $x_\mu T_w y_{\lambda'}=0$ for all $w\in W$. So assume that $x_\mu T_w y_{\lambda'}\neq 0$ for some $w\in W$. Then \cite[Lemma~4.1]{DJ} implies $\lambda''\ge\mu$, with respect to relation $\le$ defined in (\ref{DJ dominance}). But $\lambda''\ge \mu$ is equivalent to $\lambda'''\trianglelefteq\mu'$. Now $\lambda'''=\lambda'$ and $\lambda'\trianglelefteq \mu'$ implies $\mu''\trianglelefteq\lambda''$. Since $\lambda$ is a partition, we have $\lambda''=\lambda$ and obtain $\mu\trianglelefteq\mu''\trianglelefteq\lambda$, a contradiction to $\lambda\triangleleft\mu$. This shows that $x_\mu T_w y_{\lambda'}=0$ for all $w\in W$ and the proof is complete.
\end{proof}

\begin{nothing}\label{basis of Cn}
For $\lambda\in\Gamma$ and an integer $n\ge0$ we denote by $\scrB_n^\lambda$ the set of symbols
\begin{equation}\label{symbol}
  (\lambda^{(0)}\mathop{\triangleleft}\limits_{T_1}
  \lambda^{(1)}\mathop{\triangleleft}\limits_{T_2}\cdots
  \mathop{\triangleleft}\limits_{T_n}\lambda^{(n)}, t)
\end{equation}
with $ \gamma = (\lambda^{(0)}\triangleleft\cdots\triangleleft\lambda^{(n)})\in\Delta_n^\lambda$ (so $\lambda^{(0)}=\lambda$) and $T_i\in\scrThat(\lambda^{(i)},\lambda^{(i-1)})$ for $i=1,\ldots,n$, and with $t\in\scrTrs(\lambda^{(n)})$. By the definition of $C_n^\lambda$, the elements of $\scrB_n^\lambda$ parametrize and $R$-basis of $C_n^\lambda$, by associating the symbol in (\ref{symbol}) with the element 
\begin{equation*}
  \varphi^{\lambda^{(1)},\lambda^{(0)}}_{T_1}\otimes\cdots\otimes
  \varphi^{\lambda^{(n)},\lambda^{(n-1)}}_{T_n}\otimes\varepsilon_t\in M_\gamma\subseteq C_n^\lambda\,.
\end{equation*}
Note that $\scrB_0^\lambda=\scrTrs(\lambda)$. For completeness we set $\scrB_{-1}^\lambda:=\scrTst(\lambda)$ and associate to a standard tableau $t\in\scrTst(\lambda)$ the element $\psi_t\in\Hom(S^\lambda,R)$ with the property $\psi_t(z_\lambda T_d)=1$ if $d\in\scrDst_\lambda$ corresponds to $t$ under the bijection (\ref{weak Bruhat 1}) and $\psi_t(z_\lambda T_d)=0$ otherwise.
\end{nothing}
  

\section{Exactness Results}

The goal of this section is to state three exactness results, Theorems~\ref{exactness at -1}, \ref{exactness at 0} and \ref{exactness for tame compositions}, on the chain complex $\Ctilde^\lambda_*$. The lengthy and technical proof of a key lemma, Lemma~\ref{key lemma}, which is needed in the proof of the two latter theorems, will be postponed to the next section. The strategy of the proofs is the same as in \cite{BH}. The proofs in this section are adaptations of the proofs in \cite{BH}. We will present them for the reader's convenience in the Hecke algebra setting. However, the proof of Lemma~\ref{key lemma} is more difficult if $q\neq 1$.

\begin{nothing}\label{exactness proof strategy}
Assume that $\lambda\in\Gamma$, that $n\ge-1$ is an integer and that $K^\lambda_n$ is an $R$-submodule of $C_n^\lambda$ satisfying
\begin{equation}\label{A}\tag{$A_n^\lambda$}
  \im(d_{n+1}^\lambda)+K_n^\lambda=C_n^\lambda
\end{equation}
and
\begin{equation}\label{B}\tag{$B_n^\lambda$}
  \ker(d_n^\lambda)\cap K_n^\lambda = \{0\}\,.
\end{equation}
Then it is easy to see that the chain complex $\Ctilde_*^\lambda$ is exact in degree $n$. In fact, the conditions 
($A_n^\lambda$) and ($B_n^\lambda$) together are equivalent to
\begin{equation*}
  \im(d_{n+1}^\lambda) = \ker(d_n^\lambda)\quad\text{and}\quad 
  \ker(d_n^\lambda)\oplus K_n^\lambda = C_n^\lambda\,.
\end{equation*}
We will produce, for certain choices of $\lambda$ and $n$, submodules $K_n^\lambda$ which satisfy ($A_n^\lambda$) and ($B_n^\lambda$), by taking the $R$-span of the basis elements of $C_n^\lambda$ parametrized by a subset $\scrK_n^\lambda$ of the canonical $R$-basis $\scrB_n^\lambda$, cf.~Subsection~\ref{basis of Cn}. By abuse of notation we then will say that $\scrK_n^\lambda$ satisfies ($A_n^\lambda$) or ($B_n^\lambda$), if its $R$-span does.
\end{nothing}

\begin{theorem}\label{exactness at -1}
The chain complex $\Ctilde_*^\lambda$ is exact in degree $-1$ for every composition $\lambda\in\Gamma$.
\end{theorem}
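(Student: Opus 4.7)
The plan is to show that exactness in degree $-1$ reduces to surjectivity of the restriction map
\[
  d_0^\lambda\colon \Hom(M^\lambda,R)\to \Hom(S^\lambda,R),\qquad \varepsilon\mapsto \varepsilon|_{S^\lambda},
\]
since $C_{-2}^\lambda=0$ by construction. So exactness at $C_{-1}^\lambda$ means precisely that $d_0^\lambda$ is surjective.

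First I would dispose of the trivial case: if $\lambda$ is not a partition, then by definition $S^\lambda=0$, hence $C_{-1}^\lambda=\Hom(0,R)=0$, and surjectivity of $d_0^\lambda$ is automatic. So we may assume $\lambda\in\Lambda$.

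In the partition case, the only input needed is the freeness/splitting statement (\ref{Specht property 3}), which asserts that $S^\lambda$ is a direct summand of $M^\lambda$ as an $R$-module; concretely, the $R$-span of the vectors $x_\lambda T_e$ with $e\in\scrDns_\lambda$ is an $R$-complement of $S^\lambda$ in $M^\lambda$. Write $M^\lambda=S^\lambda\oplus N$ as $R$-modules. Given any $\psi\in\Hom(S^\lambda,R)$, define $\varepsilon\in\Hom(M^\lambda,R)$ by $\varepsilon(s+n):=\psi(s)$ for $s\in S^\lambda$ and $n\in N$; then $d_0^\lambda(\varepsilon)=\varepsilon|_{S^\lambda}=\psi$, so $d_0^\lambda$ is surjective.

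There is no real obstacle here: the whole content is that the Specht module, being an $R$-pure submodule of $M^\lambda$ (freeness of the complement is stronger than necessary), allows $R$-linear functionals to be extended. No input from the Hecke algebra structure or from Section~\ref{sec Homhat} is required, and nothing about the higher differentials $d_n^\lambda$ or about Lemma~\ref{key lemma} enters the argument. This is parallel to the symmetric group case treated in \cite{BH}.
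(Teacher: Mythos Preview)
Your proof is correct and follows essentially the same approach as the paper: both arguments reduce exactness in degree $-1$ to surjectivity of the restriction map $\Hom(M^\lambda,R)\to\Hom(S^\lambda,R)$, and both obtain this surjectivity from the fact (\ref{Specht property 3}) that $S^\lambda$ has an $R$-complement in $M^\lambda$. Your version simply spells out the extension of functionals and the non-partition case explicitly, whereas the paper records the conclusion in one line.
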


\begin{proof}
Since $S^\lambda$ has an $R$-complement in $M^\lambda$, cf.~(\ref{Specht property 3}), the restriction map $\Hom(M^\lambda,R)\to\Hom(S^\lambda,R)$ is surjective, and the proof is complete.
\end{proof}

The following lemma will be a key ingredient to the proof of Theorem~\ref{exactness at 0}, which in turn forms the base case for the inductive proof of Theorem~\ref{exactness for tame compositions}. To state the lemma we need to introduce the following notation. For $\lambda\in\Gamma$, every tableau $t\in\scrT(\lambda)$ can be thought of as a sequence of numbers by appending the second row at the end of the first, and so on. On these sequences we have the lexicographic total order. We set $t'<t$ if and only if the first entry in the sequence of $t$ which differs from the corresponding entry of $t'$ is greater than the one for $t'$. Moreover, for $t\in \scrTrs(\lambda)$ we denote by $C_{0,<t}^\lambda$ the $R$-span of the canonical basis elements $\varepsilon_{t'}$ of $\Hom(M^\lambda,R)$ with $t>t'\in\scrTrs(\lambda)$. See the end of Subsection~\ref{permutation module} for the definition of $\varepsilon_t$, $t\in\scrTrs(\lambda)$.

\begin{lemma}\label{key lemma}
Let $\lambda\in\Gamma$ be a composition and let $t\in\scrTrs(\lambda)$. If $t$ is not standard then $\varepsilon_t\in\im(d_1^\lambda)+C_{0,<t}^\lambda$.
\end{lemma}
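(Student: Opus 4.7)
The strategy is to exhibit an explicit element of $C_1^\lambda$ whose image under $d_1^\lambda$ equals $\varepsilon_t$ modulo $C_{0,<t}^\lambda$. Since
$$C_1^\lambda = \bigoplus_{\lambda\triangleleft\mu} \HomhatH(M^\lambda,M^\mu)\otimes\Hom(M^\mu,R)$$
and $d_1^\lambda(\varphi\otimes\varepsilon)=\varepsilon\circ\varphi$, it suffices to find a suitable $\mu\triangleright\lambda$, an element $T\in\scrThat(\mu,\lambda)$ parametrizing $\varphi=\varphi_T^{\mu,\lambda}$, and a basis functional $\varepsilon_s\in\Hom(M^\mu,R)$ (with $s\in\scrTrs(\mu)$) such that
$$\varepsilon_s\circ\varphi_T^{\mu,\lambda} = \varepsilon_t + \sum_{t'<t}c_{t'}\varepsilon_{t'}.$$

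\textbf{Locating a violation and building $\mu$, $T$, $s$.}
Since $t$ is row-standard but not standard, there is a pair of adjacent rows $i,i+1$ and a column $j$ with $t(i,j)\ge t(i+1,j)$. Choose the lexicographically smallest such pair $(i,j)$. Write the entries of row $i$ as $a_1<\cdots<a_{\lambda_i}$ and those of row $i+1$ as $b_1<\cdots<b_{\lambda_{i+1}}$, so $a_j\ge b_j$ and in particular $\lambda_{i+1}\ge j$. Define $\mu$ by $\mu_i=\lambda_i+j$, $\mu_{i+1}=\lambda_{i+1}-j$, and $\mu_k=\lambda_k$ otherwise; an easy check of partial sums gives $\lambda\triangleleft\mu$. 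Let $T\in\scrThat(\mu,\lambda)$ be the natural tableau whose row $k$ consists of $\mu_k$ copies of $k$, except that row $i$ has $\lambda_i$ copies of $i$ followed by $j$ copies of $i+1$; this is plainly row-semistandard and ascending. Finally, let $s\in\scrTrs(\mu)$ be the tableau that agrees with $t$ outside rows $i$ and $i+1$, has row $i$ equal to the increasing rearrangement of $\{a_1,\ldots,a_{\lambda_i},b_1,\ldots,b_j\}$, and has row $i+1$ equal to $(b_{j+1},\ldots,b_{\lambda_{i+1}})$.

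\textbf{Reducing to a Hecke-algebra computation.}
By Equation~(\ref{explicit permutation homomorphism}), $\varphi_T^{\mu,\lambda}(x_\lambda)=x_\mu T_d\sum_{e\in\scrD_\nu\cap W_\lambda}T_e$ for the distinguished representative $d\in\scrD_{\mu,\lambda}$ corresponding to $T$. Hence $(\varepsilon_s\circ\varphi_T^{\mu,\lambda})(x_\lambda T_f)$ is the coefficient of $x_\mu T_{d_s}$ when the product $x_\mu T_d(\sum T_e)T_f$ is expanded in the canonical basis of $M^\mu$. Using (\ref{Bruhat property}) and (\ref{lengths add property}) together with (\ref{trivial action}), this expansion can be controlled by the strong Bruhat order on $W$. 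The goal is then to show that the coefficient of $\varepsilon_t$ is a unit (in fact a power of $q$, which can be normalized to $1$), and that every other $\varepsilon_{t'}$ that appears corresponds under the bijection~(\ref{d-t-bijection}) to a row-standard tableau $t'<t$ in the lex order.

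\textbf{Translation of Bruhat dominance into the lex order on tableaux.}
The bridge between Bruhat order on $\scrD_\lambda$ and lex order on $\scrTrs(\lambda)$ is the crux of the argument. One shows: if $d_{t'}\trianglerighteq d_{t_0}$ in the Bruhat order (where $d_{t_0}$ is the distinguished representative of the coset producing the ``leading'' tableau $t$), then the tableau $t'=t^\lambda d_{t'}$ satisfies $t'\le t$. This is done row by row, using the minimality assumption on $(i,j)$: the leading contribution of $T_dT_e T_f$ corresponds precisely to the coset producing $t$ (because of how $T$ was built from the first $j$ positions of row $i+1$), while the extra summands (forced by the $qT_{ws}+(q-1)T_w$ rule) produce distinguished representatives that are larger in Bruhat order and hence tableaux that are smaller in lex order.

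\textbf{Main obstacle.}
The hard part, and the reason the proof is postponed to Section~5, is the combinatorial bookkeeping needed to make Step~3 rigorous for arbitrary $q$. When $q=1$ the computation collapses to a Garnir-type relation on row-equivalence classes, and Bruhat dominance translates cleanly into the lex order. For general $q$, the $(q-1)T_w$ correction terms in $T_wT_s$ produce a proliferation of summands indexed by elements of the Bruhat interval above $d$; one must verify that every such summand either matches $T_{d_t}$ (contributing to the leading coefficient) or lands strictly below $t$ in the lex order on $\scrTrs(\lambda)$, never above. This verification is the technical heart of the lemma.
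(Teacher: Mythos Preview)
Your single-term approach does not work, already for $q=1$. The assertion that
$\varepsilon_s\circ\varphi_T^{\mu,\lambda}=\varepsilon_t+\sum_{t'<t}c_{t'}\varepsilon_{t'}$
is false in general: terms $\varepsilon_{t'}$ with $t'>t$ appear with nonzero coefficient. Take $\lambda=(2,2)$ and $t$ with rows $(1,4)$ and $(2,3)$. The first violation is in column $j=2$ of row $i=1$, so your recipe gives $\mu=(4,0)$, the tableau $T=T^\mu_\lambda$ (so $\varphi_T=\varphi_1^{\mu,\lambda}$), and $s$ the unique element of $\scrTrs(4,0)$. Then $M^\mu$ has rank one, $x_\mu T_d=q^{l(d)}x_\mu$ for every $d\in W$, and hence
\[
\varepsilon_s\circ\varphi_1^{\mu,\lambda}=\sum_{d\in\scrD_{(2,2)}}q^{l(d)}\varepsilon_d\,.
\]
The six distinguished representatives correspond to the six $2$-element subsets $Y\subseteq\{1,2,3,4\}$ forming row~$2$, and for $Y=\{1,2\},\{1,3\},\{1,4\}$ the tableau $t_Y$ begins with $3,2,2$ respectively in position $(1,1)$, hence $t_Y>t$ lexicographically. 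These bad terms have unit coefficients and cannot be absorbed into $C_{0,<t}^\lambda$. Your ``bridge'' claim in Step~3, that Bruhat dominance on $\scrD_\lambda$ forces $t'\le t$, is therefore false; the bad terms above arise not from the $(q-1)T_w$ corrections but from the honest coset combinatorics, so no amount of Bruhat bookkeeping will remove them.

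The paper's proof fixes this by using not a single tensor but an alternating sum
\[
c_1=\sum_{X\subseteq\{a_1,\ldots,a_i\}}(-1)^{|X|}q^{f(X)}\,
\varphi_1^{\mu_{\tilde X},\lambda}\otimes\varepsilon_{\tilde X}\in C_1^\lambda\,,
\]
where $i$ is the number of good columns before the first violation and $\tilde X=X\cup\{b_{i+2},\ldots,b_n\}$. The term $X=\emptyset$ is exactly your single tensor; the remaining $2^i-1$ terms are designed so that, for each bad $Y$ (those with $Y\cap\{a_1,\ldots,a_i\}\ne\emptyset$ and $t_Y>t$), the coefficients cancel in pairs via a minimal-index involution $X\mapsto X\triangle\{a_j\}$. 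The $q$-exponent $f(X)$ and the length formula for $w_{\tilde X,Y}$ are tuned precisely so that this cancellation survives the $q$-deformation. In short, the missing idea is the inclusion--exclusion over subsets of $\{a_1,\ldots,a_i\}$; this is already the content of the classical Garnir relation when $q=1$, and the genuinely new work for general $q$ is the exponent bookkeeping that makes the signed sum still cancel.
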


The proof of Lemma~\ref{key lemma} will be given in the next section.

\begin{theorem}\label{exactness at 0}
Let $\lambda\in\Gamma$ be a composition and set
\begin{equation*}
  \scrK_0^\lambda:=\{(\lambda,t)\in\scrB_0^\lambda\mid t\in\scrTst(\lambda)\}\,.
\end{equation*}
Then ($A^\lambda_0$) and ($B_0^\lambda$) are satisfied. In particular, $\Ctilde_*^\lambda$ is exact in degree $0$ for every composition $\lambda\in\Gamma$.
\end{theorem}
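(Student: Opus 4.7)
The plan is to verify conditions $(A_0^\lambda)$ and $(B_0^\lambda)$ for the $R$-span $K_0^\lambda$ of $\{\varepsilon_t\mid t\in\scrTst(\lambda)\}$; once this is done, the reformulation in Subsection~\ref{exactness proof strategy} immediately delivers exactness in degree~$0$. The two conditions decouple cleanly: $(A_0^\lambda)$ will be reduced by induction to Lemma~\ref{key lemma}, while $(B_0^\lambda)$ will follow from the Specht-basis triangularity recorded in (\ref{Specht property 1}) and (\ref{Specht property 2}).

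For $(A_0^\lambda)$, I would argue by induction on $t\in\scrTrs(\lambda)$ with respect to the total lexicographic order introduced just before Lemma~\ref{key lemma}. If $t\in\scrTst(\lambda)$ then $\varepsilon_t\in K_0^\lambda$ and there is nothing to show. Otherwise Lemma~\ref{key lemma} places $\varepsilon_t$ in $\im(d_1^\lambda)+C_{0,<t}^\lambda$, so $\varepsilon_t$ differs from an element of $\im(d_1^\lambda)$ by an $R$-linear combination of basis elements $\varepsilon_{t'}$ with $t'<t$ in $\scrTrs(\lambda)$; the inductive hypothesis applied to each such $t'$ completes the step. Since $\{\varepsilon_t\mid t\in\scrTrs(\lambda)\}$ is the canonical $R$-basis of $C_0^\lambda=\Hom(M^\lambda,R)$, this yields $(A_0^\lambda)$.

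For $(B_0^\lambda)$, the plan is to compute the restriction $d_0^\lambda\colon K_0^\lambda\to\Hom(S^\lambda,R)$ in suitable bases. The bijections (\ref{weak Bruhat 1}) and (\ref{weak Bruhat 2}) identify $\scrTst(\lambda)$ with $\{d\in W\mid d\ge w_{\lambda'}\}$ via $t=t^\lambda w_\lambda d$, and the same set $\{d\ge w_{\lambda'}\}$ indexes the $R$-basis $\{z_\lambda T_d\}$ of $S^\lambda$. Expanding $z_\lambda T_d=\sum_e\alpha_{d,e}x_\lambda T_e$, property~(\ref{Specht property 1}) gives $\alpha_{d,w_\lambda d}=q^{l(d)}$, a unit of $R$, while (\ref{Specht property 2}) forces $\alpha_{d,w_\lambda d_0}=0$ whenever $d\ne d_0$ and $l(w_\lambda d_0)\le l(w_\lambda d)$. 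Ordering the indexing $d$'s by $l(w_\lambda d)$, the matrix of $d_0^\lambda|_{K_0^\lambda}$ with respect to $\{\varepsilon_t\}_{t\in\scrTst(\lambda)}$ and the dual basis of $\{z_\lambda T_d\}$ is therefore triangular with unit diagonal, hence invertible; in particular $d_0^\lambda$ is injective on $K_0^\lambda$, which is $(B_0^\lambda)$.

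The substantive content is entirely absorbed into Lemma~\ref{key lemma}; once that is granted, $(A_0^\lambda)$ is a straightforward lex-induction and $(B_0^\lambda)$ is a direct read-off of the well-known DJ-triangularity. The hard part will therefore be Lemma~\ref{key lemma} itself, whose proof is postponed to Section~5 and requires considerably more delicate arguments for general $q$ than in the symmetric-group case of~\cite{BH}.
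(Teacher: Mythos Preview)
Your proposal is correct and follows essentially the same approach as the paper: the same lex-induction via Lemma~\ref{key lemma} for $(A_0^\lambda)$, and the same triangularity from (\ref{Specht property 1})--(\ref{Specht property 2}) with respect to the length $l(w_\lambda d)$ for $(B_0^\lambda)$. The only cosmetic differences are that the paper phrases $(B_0^\lambda)$ as a minimal-counterexample contradiction rather than your invertible-matrix formulation, and separates out the trivial case where $\lambda$ is not a partition (so $\scrTst(\lambda)=\emptyset$ and $K_0^\lambda=0$).
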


\begin{proof}
In order to show that ($A_0^\lambda$) holds for $\scrK_0^\lambda$, we order the elements in $\scrTrs(\lambda)$ in ascending lexicographic order: $t_1<t_2<\cdots<t_n$. For every $i\in\{1,\ldots,n\}$, Lemma~\ref{key lemma} implies that $t_i\in\scrTst(\lambda)$ or $\varepsilon_{t_i}\in\im(d_1^\lambda)+C_{0,<t_i}^\lambda$. An easy induction argument on $i\in\{1,\ldots,n\}$ now shows that $\varepsilon_{t_i}\in\im(d_i^\lambda)+K_0^\lambda$, and ($A_0^\lambda$) holds.

If $\lambda$ is not a partition then ($B_0^\lambda$) holds trivially. Suppose that $\lambda$ is a partition.
In order to show that ($B_0^\lambda$) holds for $\scrK^\lambda_0$, we number the standard $\lambda$-tableaux $t_1,\ldots, t_m$ in such a way that the following holds for $i,j\in\{1,\ldots,m\}$: If $t_i=t^\lambda w_\lambda d_i$ with unique $w_{\lambda'}\le d_i\in W$, cf.~(\ref{weak Bruhat 1}) and (\ref{weak Bruhat 2}), then $l(w_\lambda d_i)<l(w_\lambda d_j)$ implies $i>j$. Next assume that there exists $0\neq\varepsilon\in K_0^\lambda\cap\ker(d_0^\lambda)$ and write $\varepsilon =\sum_{i=1}^m a_i\varepsilon_{t_i}$ with $a_i\in R$. Let $i$ be minimal with $a_i\neq 0$. Then, since $0= d_0^\lambda(\varepsilon)=\varepsilon|_{S^\lambda}$, we have
\begin{equation*}
  0=\varepsilon(z_\lambda T_{d_i}) = \sum_{j=i}^m a_j\varepsilon_{t_j}(z_\lambda T_{d_i})\,.
\end{equation*}
By (\ref{Specht property 1}) we obtain $\varepsilon_{t_i}(z_\lambda T_{d_i}) = q^{l(d_i)}$. Moreover, by (\ref{Specht property 2}) we obtain $\varepsilon_{t_j}(z_\lambda T_{d_i}) = 0$ for $j>i$. This implies $0=a_iq^{l(d_i)}$, and since $q$ is a unit, we have $a_i=0$, a contradiction. Thus, ($B_0^\lambda$) holds and the proof is complete.
\end{proof}

We recall from \cite{BH} that a composition $\mu\in\Gamma$ is called a {\em quasi-partition} if $\mu^*=\bar{\mu}$. Here, $\bar{\mu}$ is the unique smallest partition in $\Lambda$ which dominates $\mu$ (cf.~\cite[Lemma~4.2]{BH}) and $\mu^*\in\Lambda$ denotes the unique partition obtained by reordering the parts $\mu_1,\mu_2,\ldots$ of $\mu$ in weakly descending order. Recall also that a composition $\lambda\in\Gamma$ is called {\em tame} if every composition $\mu\in\Gamma$ with $\lambda\trianglelefteq \mu$ is a {\em quasi-partition}. Tame compositions are classified in \cite[Proposition~4.6]{BH}. They include all partitions of the form $(\lambda_1,\lambda_2,\lambda_3)$ with $\lambda_3\in\{0,1\}$. 

\smallskip
The technical definition of a tame composition is dictated by the inductive proof of the following theorem. The proof is a straight forward adaptation of the proof of Theorem~5.1 in \cite{BH} and will not be repeated here.

\begin{theorem}\label{exactness for tame compositions}
Let $\lambda\in\Gamma$ be a tame composition. For $n=-1$ set $\scrK_{-1}^\lambda:=\emptyset$, for $n=0$ set $\scrK_0^\lambda:=\scrTst(\lambda)$ and for $n\ge 1$ set
\begin{equation*}
  \scrK_n^\lambda:=\bigl\{(\lambda = \lambda^{(0)}\mathop{\triangleleft}\limits_{T_1}
  \lambda^{(1)}\mathop{\triangleleft}\limits_{T_2}\cdots
  \mathop{\triangleleft}\limits_{T_n}\lambda^{(n)}, t) \in\scrB_n^\lambda \mid t\in\scrTst(\lambda^{(n)}\bigr\}\,.
\end{equation*}
Then ($A_n^\lambda$) and ($B_n^\lambda$) hold for all $n\ge-1$. In particular, the chain complex $\Ctilde_*^\lambda$ is exact.
\end{theorem}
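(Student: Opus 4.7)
The plan is a double induction. First, I observe that tameness is hereditary: if $\lambda\in\Gamma$ is tame and $\lambda\triangleleft\mu$, then every $\nu\trianglerighteq\mu$ also satisfies $\nu\trianglerighteq\lambda$ and is thus a quasi-partition, so $\mu$ is tame. This lets me invoke the theorem inductively for all $\mu$ with $\lambda\triangleleft\mu$. The induction on $n\ge -1$ has base cases $n=-1$ (take $K_{-1}^\lambda=0$, so $(A_{-1}^\lambda)$ is Theorem~\ref{exactness at -1} and $(B_{-1}^\lambda)$ is trivial) and $n=0$ (Theorem~\ref{exactness at 0}).

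For the inductive step $n\ge 1$, I use the canonical decomposition
\[
  C_n^\lambda \;=\; \bigoplus_{\mu\,:\,\lambda\,\triangleleft\,\mu} \HomhatH(M^\lambda,M^\mu)\otimes C_{n-1}^\mu,
\]
obtained by detaching the first edge of each chain $\gamma=(\lambda\triangleleft\lambda^{(1)}\triangleleft\cdots\triangleleft\lambda^{(n)})$. Under this decomposition, $\scrK_n^\lambda$ corresponds summand-by-summand to the $R$-span of $\scrK_{n-1}^\mu$ in the tail factor, and the differential $d_n^\lambda$ splits via (\ref{d-recursion}) into a composition piece $(\varphi_2\circ\varphi_1)\otimes\cdots$ and an inner-differential piece $\varphi_1\otimes d_{n-1}^\mu(\cdots)$, where the composition piece maps the $\mu$-summand into the $\mu^{(1)}$-summand (with $\mu^{(1)}$ the second vertex of the chain).

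To establish $(A_n^\lambda)$, I take a basis element $\varphi\otimes c$ with $c\in C_{n-1}^\mu$ of non-standard final tableau, apply the inductive $(A_{n-1}^\mu)$ to write $c=d_n^\mu(c')+k$ with $k$ in the span of $\scrK_{n-1}^\mu$, and rewrite $\varphi\otimes d_n^\mu(c')$ via (\ref{d-recursion}) applied to $\varphi\otimes c'\in C_{n+1}^\lambda$: modulo $\im(d_{n+1}^\lambda)$ it equals a signed sum of composition terms $(\varphi_2\circ\varphi)\otimes c''$ that remain in $\HomhatH(M^\lambda,M^{\mu^{(1)}})\otimes C_{n-1}^{\mu^{(1)}}$ by Proposition~\ref{ascending composition proposition}. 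I iterate the reduction on these composition terms; termination is controlled by a well-founded order on $\scrB_n^\lambda$ combining the lexicographic order on final tableaux (arising from Lemma~\ref{key lemma}) with the depth to which the first $\HomhatH$-factor has been composed, so that each step either lands in $K_n^\lambda$ or strictly decreases the order parameter. For $(B_n^\lambda)$, I reproduce the strategy of $(B_0^\lambda)$ from the proof of Theorem~\ref{exactness at 0}: assume $0\neq\varepsilon\in K_n^\lambda\cap\ker(d_n^\lambda)$, select a chain $\gamma$ and a final tableau $t\in\scrTst(\lambda^{(n)})$ with nonzero coefficient whose associated $l(w_{\lambda^{(n)}}d_t)$ is minimal, evaluate the appropriate tail factor of $d_n^\lambda(\varepsilon)$ on the Specht element $z_{\lambda^{(n)}}T_{d_t}$, and use (\ref{Specht property 1}) and (\ref{Specht property 2}) together with the inductive $(B_{n-1}^\mu)$ to extract a leading coefficient $a_\gamma q^{l(d_t)}=0$, contradicting $q\in R^{\times}$.

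The main obstacle is the combinatorial bookkeeping for the termination of the iteration in $(A_n^\lambda)$: each reduction exchanges a basis element for a sum of composition terms whose chain shape, first-map depth, and final tableau must provably strictly decrease under a well-founded order. The key inputs — Proposition~\ref{ascending composition proposition} (keeping composed morphisms in $\HomhatH$), Lemma~\ref{key lemma} (breaking the final-tableau recursion), and tameness (ensuring every intermediate chain lies within the quasi-partitions dominating $\lambda$) — are all in place; the argument then reduces to a line-by-line adaptation of the proof of Theorem~5.1 in \cite{BH}, with the only Hecke-specific subtlety being the reliance on $q\in R^\times$ in the $(B_n^\lambda)$ step.
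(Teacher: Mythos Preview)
The paper itself gives no proof here, deferring entirely to Theorem~5.1 of \cite{BH}, so your proposal is really to be measured against that reference. Your inductive framework matches it: tameness is hereditary upward, so the full statement is available by outer induction for every $\mu\triangleright\lambda$; the base cases $n=-1,0$ are Theorems~\ref{exactness at -1} and~\ref{exactness at 0}; and for $n\ge1$ one works through the first-edge decomposition $C_n^\lambda=\bigoplus_{\mu\triangleright\lambda}\HomhatH(M^\lambda,M^\mu)\otimes C_{n-1}^\mu$ together with~(\ref{d-recursion}). Your $(A_n^\lambda)$ sketch is correct, and the termination is precisely the well-foundedness of dominance on the first vertex $\mu$ (each pass through~(\ref{d-recursion}) replaces a term in the $\mu$-summand by terms in strictly higher $\mu^{(1)}$-summands).

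Your $(B_n^\lambda)$ sketch, however, does not work as written. You propose to choose a basis element with $l(w_{\lambda^{(n)}}d_t)$ minimal and ``evaluate the appropriate tail factor of $d_n^\lambda(\varepsilon)$ on the Specht element $z_{\lambda^{(n)}}T_{d_t}$''; but for $n\ge1$ the element $d_n^\lambda(\varepsilon)$ lies in $C_{n-1}^\lambda$, whose summands end in $\Hom(M^\nu,R)$ for \emph{varying} terminal vertices $\nu$ that need not equal your chosen $\lambda^{(n)}$, so there is no such evaluation available, and comparing $l(w_{\lambda^{(n)}}d_t)$ across chains with different $\lambda^{(n)}$ is not meaningful. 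The argument in \cite{BH} instead stays with the first-edge decomposition you already set up: write $\varepsilon=\sum_{\mu,T}\varphi_T\otimes c_{\mu,T}$ with $c_{\mu,T}\in K_{n-1}^\mu$, pick $\mu_0$ \emph{dominance-minimal} among the first vertices with some $c_{\mu_0,T}\neq0$, and project $0=d_n^\lambda(\varepsilon)$ onto the $\mu_0$-summand of $C_{n-1}^\lambda$. The composition terms in~(\ref{d-recursion}) land in summands with first vertex strictly above $\mu_0$, so only $-\sum_T\varphi_T\otimes d_{n-1}^{\mu_0}(c_{\mu_0,T})$ survives; linear independence of the $\varphi_T$ forces $d_{n-1}^{\mu_0}(c_{\mu_0,T})=0$, and then the inductive $(B_{n-1}^{\mu_0})$ (valid since $\mu_0$ is tame) yields the contradiction $c_{\mu_0,T}=0$. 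No Specht-element evaluation is needed beyond the base case $n=0$.
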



\section{Proof of Lemma~\ref{key lemma}}

The exclusive goal of this section is to prove Lemma~\ref{key lemma}. The proof is a substantial refinement of the proof of \cite[Lemma~3.4]{BH}. Throughout we fix a composition $\lambda\in\Gamma$ and a $\lambda$-tableau $t\in\scrTrs(\lambda)\smallsetminus\scrTst(\lambda)$. We first need to establish some notation that will be used throughout this section.

\begin{nothing}
Since $t\in\scrTrs(\lambda)$ is not standard, there exist consecutive rows of $t$, say rows $k$ and $k+1$, which we write as
\begin{equation*}
  a_1\ a_2\ \cdots\ a_m\quad (m=\lambda_k)
\end{equation*}
and
\begin{equation*}
  b_1\ b_2\ \cdots\ b_n\quad (n=\lambda_{k+1}),
\end{equation*}
such that there exist $i\in\{0,\ldots,\min\{m,n-1\}\}$ satisfying
\begin{equation*}
  a_1<b_1,\ a_2<b_2,\ \cdots, a_i<b_i,\quad\text{and}\quad (a_{i+1}>b_{i+1}\ \text{or}\ m=i)\,.
\end{equation*}
The case $m=i$ can arise when $\lambda_{k+1}>\lambda_k$. We also set
\begin{equation*}
  Z:=\{a_1,\ldots,a_m,b_1,\ldots,b_n\}\,.
\end{equation*}
For every subset $X\subseteq Z$ with $|X|\le n$ we set 
\begin{equation*}
  \mu_X:=(\lambda_1,\ldots,\lambda_{k-1},\lambda_k+n-|X|, |X|,\lambda_{k+2},\ldots)\in\Gamma
\end{equation*}
and denote by $t_X\in\scrTrs(\mu_X)$ the tableau which coincides with $t$ in all rows except row $k$ and row $k+1$, and which has the elements of $X$ in row $k+1$ and those of $Z\smallsetminus X$ in row $k$. Moreover, we denote by $d_X\in\scrD_{\mu_X}$ the element which satisfies $t_X=t^{\mu_X}d_X$, and by $\varepsilon_X\in\Hom(M^{\mu_X},R)$ the element $\varepsilon_{t_X}=\varepsilon_{d_X}$. We will also use the composition
\begin{equation*}
  \nu_X:=(\lambda_1,\ldots,\lambda_k, n-|X|,|X|,\lambda_{k+2},\ldots)\in\Gamma\,.
\end{equation*}
We then have
\begin{equation*}
  \nu_X\trianglelefteq\lambda\trianglelefteq\mu_X\quad\text{and}\quad W_{\nu_X}=W_{\mu_X}\cap W_\lambda\,.
\end{equation*}
Finally, we define the adjacent, pairwise disjoint intervals of integers,
\begin{align*}
  A_X & :=\{\lambda_1+\cdots+\lambda_{k-1}+1,\lambda_1+\cdots+\lambda_{k-1}+2,\ldots,
  \lambda_1+\cdots+\lambda_{k-1}+\lambda_k\}\,,\\
  B_X & :=\{\lambda_1+\cdots+\lambda_k+1,\ldots,\lambda_1+\cdots+\lambda_k+(n-|X|)\}\,,\\
  C_X & :=\{\lambda_1+\cdots+\lambda_k+(n-|X|+1),\ldots,\lambda_1+\cdots+\lambda_k+n\}\,,
\end{align*}
(with $n=\lambda_{k+1}$)  and set 
\begin{equation*}
  D:=\{1,\ldots,r\}\smallsetminus (A_X\cup B_X\cup C_X)
\end{equation*}
Thus, $(A_X,B_X,C_X)$ are the $k$-th, $(k+1)$-th, $(k+2)$-th respective subsets of the set partition associated with $\nu_X$, $(A_X, B_X\cup C_X)$ are the $k$-th and $(k+1)$-th respective subsets of the set partition associated with $\lambda$, and $(A_X\cup B_X,C_X)$ are the $k$-th and $(k+1)$-th subsets associated with $\mu_X$.

Recall from Subsection~\ref{Hom(M,M)} that, for $X$ as above, one has an element $\varphi_1^{\mu_X,\lambda}\in\HomhatH(M^\lambda,M^{\mu_X})$.
\end{nothing}

\begin{lemma}\label{lemma 1}
Let $X\subseteq Z$ be a subset with $|X|\le n$.

{\rm (a)} For $d\in\scrD_\lambda$, the set
\begin{equation*}
  W_{X,d}:= W_{\mu_X}\cap(\scrD_{\nu_X}\cap W_\lambda)dd_X^{-1}
\end{equation*}
has at most one element.

{\rm (b)} In $\Hom(M^\lambda,R)$, one has the equation
\begin{equation*}
  \varepsilon_X\circ\varphi_1^{\mu_X,\lambda} =
  \sum_{d\in\scrD_\lambda \atop W_{X,d}\neq\emptyset} q^{l(w_{X,d})}\varepsilon_d\,,
\end{equation*}
where we write $W_{X,d}=\{w_{X,d}\}$ when $W_{X,d}\neq\emptyset$.
\end{lemma}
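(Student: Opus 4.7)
For part (a), I would argue set-theoretically via the distinguished coset representatives for $W_{\nu_X}\backslash W$. Suppose $w,w'\in W_{X,d}$ and write $w=u d d_X^{-1}$, $w'=u' d d_X^{-1}$ with $u,u'\in\scrD_{\nu_X}\cap W_\lambda$. Then $w(w')^{-1}=u(u')^{-1}$ lies in $W_{\mu_X}$ (because $w,w'\in W_{\mu_X}$) and in $W_\lambda$ (because $u,u'\in W_\lambda$). Hence $u(u')^{-1}\in W_{\mu_X}\cap W_\lambda=W_{\nu_X}$, so $u$ and $u'$ lie in the same $W_{\nu_X}$-coset. Since both are distinguished representatives in $\scrD_{\nu_X}$, this forces $u=u'$ and therefore $w=w'$.

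For part (b), the plan is to apply both sides to the canonical basis element $x_\lambda T_d$, $d\in\scrD_\lambda$, of $M^\lambda$, and verify agreement. First, using (\ref{explicit permutation homomorphism}) with $d=1$ and the fact that $W_{\nu_X}=W_{\mu_X}\cap W_\lambda$, I get
\begin{equation*}
  \varphi_1^{\mu_X,\lambda}(x_\lambda)=x_{\mu_X}\sum_{e\in\scrD_{\nu_X}\cap W_\lambda}T_e\,.
\end{equation*}
Since $e\in W_\lambda$ and $d\in\scrD_\lambda$, property (\ref{additivity}) gives $T_eT_d=T_{ed}$, so
\begin{equation*}
  \varphi_1^{\mu_X,\lambda}(x_\lambda T_d)=\sum_{e\in\scrD_{\nu_X}\cap W_\lambda}x_{\mu_X}T_{ed}\,.
\end{equation*}
Next, for each such $e$, write the unique factorization $ed=v_e f_e$ with $v_e\in W_{\mu_X}$ and $f_e\in\scrD_{\mu_X}$, so $l(v_ef_e)=l(v_e)+l(f_e)$. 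Then $T_{ed}=T_{v_e}T_{f_e}$ by (\ref{additivity}), and (\ref{trivial action}) gives $x_{\mu_X}T_{ed}=q^{l(v_e)}x_{\mu_X}T_{f_e}$.

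The final step is to apply $\varepsilon_X=\varepsilon_{d_X}$, which picks out precisely those terms with $f_e=d_X$. The condition $f_e=d_X$ is equivalent to $ed\in W_{\mu_X}d_X$, i.e.\ $edd_X^{-1}\in W_{\mu_X}$, and in that case $v_e=edd_X^{-1}$ belongs to $W_{\mu_X}\cap(\scrD_{\nu_X}\cap W_\lambda)dd_X^{-1}=W_{X,d}$. By part (a) this $v_e$ is $w_{X,d}$, the unique element of $W_{X,d}$. Hence
\begin{equation*}
  (\varepsilon_X\circ\varphi_1^{\mu_X,\lambda})(x_\lambda T_d)=\begin{cases} q^{l(w_{X,d})} & \text{if $W_{X,d}\neq\emptyset$,}\\ 0 & \text{otherwise,}\end{cases}
\end{equation*}
which matches the value of the right hand side on $x_\lambda T_d$ by the definition of the dual basis $\{\varepsilon_d\}$.

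The only mildly subtle point is the bookkeeping in step (b): one must be careful to use (\ref{additivity}) with the correct distinguished coset systems (first for $W_\lambda\backslash W$ to get $T_eT_d=T_{ed}$, then for $W_{\mu_X}\backslash W$ to factor $ed=v_ef_e$), and then to correctly identify the condition $f_e=d_X$ with membership of $v_e$ in the set $W_{X,d}$ of part (a). Once this matching is done, part (a) provides the uniqueness that makes the coefficient $q^{l(w_{X,d})}$ well defined.
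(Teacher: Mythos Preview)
Your proof is correct and follows essentially the same approach as the paper's own proof: part~(a) is argued identically via $u(u')^{-1}\in W_{\mu_X}\cap W_\lambda=W_{\nu_X}$, and part~(b) proceeds by evaluating on $x_\lambda T_d$, expanding $\varphi_1^{\mu_X,\lambda}(x_\lambda)$ via (\ref{explicit permutation homomorphism}), factoring $ed=v_ef_e$ through $W_{\mu_X}\backslash W$, and applying $\varepsilon_X$. Your write-up is in fact slightly more careful than the paper's in making explicit the use of (\ref{additivity}) to justify $T_eT_d=T_{ed}$.
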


\begin{proof}
(a) Let $d\in\scrD_\lambda$ and assume that one has elements $w_1,w_2\in W_{\mu_X}$ and $e_1,e_2\in\scrD_{\nu_X}\cap W_\lambda$ with $w_i=e_idd_X^{-1}$ for $i=1,2$. Then $e_1e_2^{-1}=w_1w_2^{-1}\in W_{\mu_X}\cap W_\lambda=W_{\nu_X}$. Since $e_1,e_2\in\scrD_{\nu_X}$, we have $e_1=e_2$ and $w_1=w_2$.

(b) Recall that the elements $x_\lambda T_d$, $d\in\scrD_\lambda$, form an $R$-basis of $M^\lambda$. For $d\in\scrD_\lambda$ we have
\begin{equation*}
  \varepsilon_X\bigl(\varphi_1^{\mu_X,\lambda}(x_\lambda T_d)\bigr) 
  = \varepsilon_X\bigl(\varphi_1^{\mu_X,\lambda}(x_\lambda)T_d\bigr)
  = \sum_{e\in\scrD_{\nu_X}\cap W_\lambda} \varepsilon_X(x_{\mu_X} T_{ed})
\end{equation*}
by (\ref{explicit permutation homomorphism}). For every $e\in\scrD_{\nu_X}\cap W_\lambda$ there exists a unique $f_{d,e}\in\scrD_{\mu_X}$ with $ed\in W_{\mu_X}f_{e,d}$ and a unique element $w_{e,d}\in W_{\mu_X}$ with $ed=w_{e,d}f_{e,d}$. We continue the above computation:
\begin{equation*}
\begin{split}
  \varepsilon_X & \bigl(\varphi_1^{\mu_X,\lambda}(x_\lambda T_d)\bigr)
  = \sum_{e\in\scrD_{\nu_X}\cap W_\lambda} \varepsilon_X(x_{\mu_X} T_{w_{e,d} f_{e,d}}) \\
  & = \sum_{e\in\scrD_{\nu_X}\cap W_\lambda} q^{l(w_{e,d})} \varepsilon_X(x_{\mu_X} T_{f_{e,d}})
  = \sum_{e\in\scrD_{\nu_X}\cap W_\lambda \atop f_{e,d} = d_X} q^{l(w_{e,d})}\,.
\end{split}
\end{equation*}
But, if $e\in\scrD_{\nu_X}\cap W_\lambda$ satisfies $f_{e,d} = d_X$ then $w_{e,d} = edd_X^{-1} \in W_{X,d} = 
\{w_{X,d}\}$ and $e$ is the only element in $\scrD_{\nu_X}\cap W_\lambda$ with $f_{e,d} = d_X$. Conversely, if $W_{X,d}\neq\emptyset$ and we write $w_{X,d} = edd_X^{-1}$ with $e\in\scrD_{\nu_X}\cap W_\lambda$ then $e$ satisfies $f_{e,d}=d_X$. Thus, we have
\begin{equation*}
  \varepsilon_X\bigl( \varphi_1^{\mu_X,\lambda}(x_\lambda T_d)\bigr) = 
  \begin{cases}
    0 & \text{if $W_{X,d} = \emptyset$,}\\
    q^{l(w_{X,d})} & \text{if $W_{X,d} \neq \emptyset$.}
  \end{cases}
\end{equation*}
Now the statement in (b) is immediate and the proof is complete.
\end{proof}

\begin{lemma}\label{lemma 2}
Let $X\subseteq Z$ be a subset with $|X|\le n$.

{\rm (a)} The function
\begin{equation*}
  \{X\subseteq Y\subseteq Z\mid |Y|=n\}\to\scrD_\lambda\,,\quad Y\mapsto d_Y\,,
\end{equation*}
is injective with image $\{d\in\scrD_\lambda\mid W_{X,d}\neq\emptyset\}$.

{\rm (b)} In $\Hom(M^\lambda, R)$, one has
\begin{equation*}
  \varepsilon_X\circ\varphi_1^{\mu_X,\lambda} = \sum_{X\subseteq Y\subseteq Z \atop |Y|=n}
  q^{l(w_{X,Y})} \varepsilon_Y\,,
\end{equation*}
where $w_{X,Y}:=w_{X,d_Y}$.

{\rm (c)} For $X\subseteq Y\subseteq Z$ with $|Y|=n$ one has
\begin{equation*}
  l(w_{X,Y})=\sum_{y\in Y\smallsetminus X} |\{z\in Z\smallsetminus Y \mid y<z\}|\,.
\end{equation*}
\end{lemma}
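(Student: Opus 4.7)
The plan is to prove (a), (b), (c) in order. For part (a), I first note that when $|Y|=n$ the composition $\mu_Y$ coincides with $\lambda$, so $d_Y\in\scrD_\lambda$ and $t_Y\in\scrTrs(\lambda)$ is well defined. Injectivity is clear since the $(k+1)$-st row of $t_Y$ equals $Y$ as a set. To identify the image I observe that because $\scrD_{\nu_X}\cap W_\lambda$ is a system of representatives of $W_{\nu_X}\backslash W_\lambda$ and $W_{\nu_X}\subseteq W_{\mu_X}$, the condition $W_{X,d}\neq\emptyset$ is equivalent to the existence of some $w_\lambda\in W_\lambda$ with $w_\lambda d\in W_{\mu_X}d_X$, i.e., with $t^{\mu_X}w_\lambda d$ row-equivalent to $t_X$ as a $\mu_X$-tableau. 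In the forward direction, given $Y\supseteq X$ with $|Y|=n$, I pick $w_\lambda\in W_\lambda$ whose action on row $k+1$ of $\lambda$ (positions $B_X\cup C_X$) sends $B_X$ to $(Y\smallsetminus X)\cdot d_Y^{-1}$; a row-by-row comparison then shows $t^{\mu_X}w_\lambda d_Y$ is row-equivalent to $t_X$. Conversely, if $W_{X,d}\neq\emptyset$ with witness $e$, I set $Y:=(B_X\cup C_X)\cdot d$; the row-equivalence constraints force $|Y|=n$, $X\subseteq Y\subseteq Z$, and every row of $t^\lambda d$ to coincide with the corresponding row of $t_Y$, hence $d=d_Y$. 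Part (b) is then immediate from (a) together with Lemma~\ref{lemma 1}(b).

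For part (c), I exploit the factorization $ed_Y=w_{X,Y}d_X$, with $e\in W_\lambda$, $d_Y\in\scrD_\lambda$, $w_{X,Y}\in W_{\mu_X}$ and $d_X\in\scrD_{\mu_X}$. Two applications of (\ref{additivity}) give
\[
l(w_{X,Y})=l(e)+l(d_Y)-l(d_X).
\]
For any row-standard tableau $t^\nu d$, the length $l(d)$ counts pairs of entries $(r,s)$ with $r>s$ and $r$ in an earlier row than $s$. Comparing $t_Y$ (of shape $\lambda$) and $t_X$ (of shape $\mu_X$), all contributions involving rows other than $k$ and $k+1$ cancel, and a short manipulation based on the disjoint decompositions $Z\smallsetminus X=(Z\smallsetminus Y)\sqcup(Y\smallsetminus X)$ and $Y=X\sqcup(Y\smallsetminus X)$ yields
\[
l(d_Y)-l(d_X)=\sum_{y\in Y\smallsetminus X}\bigl|\{z\in Z\smallsetminus Y : z>y\}\bigr|-\sum_{x\in X}\bigl|\{y\in Y\smallsetminus X : y>x\}\bigr|.
\]
Next, the shortest representative $e\in\scrD_{\nu_X}\cap W_\lambda$ is the identity on all rows of $\lambda$ other than row $k+1$ and acts on $B_X\cup C_X$ as a shuffle with $B_X\cdot e=(Y\smallsetminus X)\cdot d_Y^{-1}$. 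Since $d_Y$ restricts to an order-preserving bijection $B_X\cup C_X\to Y$ (because $t_Y$ is row-standard), the standard inversion count for a shuffle translates into $l(e)=\sum_{x\in X}\bigl|\{y\in Y\smallsetminus X : y>x\}\bigr|$. Substituting back, the two $\sum_{x\in X}$ terms cancel and the formula in (c) drops out.

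The main obstacle will be the combinatorial bookkeeping, both in pinning down exactly which subsets $Y$ realize the nonempty sets $W_{X,d}$ in part (a) and in correctly translating the shuffle structure of the shortest coset representative $e$ into its length contribution in part (c).
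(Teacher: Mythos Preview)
Your argument is correct. Parts (a) and (b) follow the paper's route closely: you recast $W_{X,d}\neq\emptyset$ as the existence of $w_\lambda\in W_\lambda$ with $t^{\mu_X}w_\lambda d$ row-equivalent to $t_X$, which is exactly the content of the paper's explicit construction of $e$ and verification that $ed_Yd_X^{-1}\in W_{\mu_X}$, just phrased one level up in the tableau language.

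Part (c) is where you genuinely diverge. The paper proves (c) by analyzing $w_{X,Y}$ directly: it uses the explicit description of $e$ from the proof of (a) to show that $w_{X,Y}$ is the identity on $C_X\cup D$, then argues that on $A_X\cup B_X$ it is the shuffle sending $B_X$ monotonously to the positions of $Y\smallsetminus X$ in row $k$ of $t_X$ and $A_X$ monotonously to the complement; the inversion count of this shuffle is then read off as the stated sum. Your approach instead never looks at $w_{X,Y}$ itself: you use the two length-additivity relations $l(ed_Y)=l(e)+l(d_Y)$ and $l(w_{X,Y}d_X)=l(w_{X,Y})+l(d_X)$ to reduce to computing $l(e)$, $l(d_Y)$, $l(d_X)$ separately, then exploit a cancellation between the cross terms. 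The paper's method is more self-contained (one permutation, one inversion count) but leans on the explicit formula for $e$ established inside (a); your method is a clean application of (\ref{additivity}) that trades one inversion count for three easier ones, and it would work even without having pinned down $w_{X,Y}$ on $C_X\cup D$.
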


\begin{proof}
(a) For $X\subseteq Y\subseteq Z$ with $|Y|=n$ we have $d_Y\in\scrD_\lambda$, since $\mu_Y=\lambda$. Moreover, the map $Y\mapsto d_Y$ is clearly injective. Our next goal is to show that $W_{X,d_Y}\neq\emptyset$. Note that $X\subseteq Y=(B_X\cup C_X)d_Y$ implies $(X)d_Y^{-1} \subseteq B_X \cup C_X$. We define an element $e\in W$ by
\begin{align}\label{e definition}
\begin{split}
  & e|_{A_X\cup D}\text{ is the identity,}\\
  & e|_{C_X}\colon C_X\liso (X)d_Y^{-1} \text{ is monotonous,}\\
  & e|_{B_X}\colon B_X\liso (B_X\cup C_X)\smallsetminus (X)d_Y^{-1} \text{ is monotonous.}
\end{split}
\end{align}
Then, $e\in\scrD_{\nu_X}$, since $e$ is monotonous on $A_X\cup B_X$, on $C_X$ and on $D$. Moreover, $e\in W_\lambda$, since $(B_X\cup C_X)e=B_X\cup C_X$ and $e$ is the identity on $A_X\cup D$. We set
\begin{equation}\label{w definition}
  w:=ed_Yd_X^{-1}\,.
\end{equation}
We claim that
\begin{equation}\label{w is identity}
  w|_{C_X\cup D} \text{ is the identity map.}
\end{equation}
In fact, for $x\in D$ we have $(x)ed_Yd_X^{-1} =(x)d_Yd_X^{-1}=x$, since $d_X$ and $d_Y$ coincide on $D$. Also, $w|_{C_X}$ is the composition
\begin{equation*}
  C_X\Ar{e} (X)d_Y^{-1} \Ar{d_Y} X \Ar{d_X^{-1}} C_X\,.
\end{equation*}
This composition is monotonous, since each of the three factors is monotonous (because $e\in\scrD_{\nu_X}$, $d_Y\in\scrD_\lambda$, $d_X\in\scrD_{\mu_X}$ and $(C_X)e\subseteq B_X\cup C_X$). Thus, $w|_{C_X}$ is the identity map and Claim~(\ref{w is identity}) is proven. Since $w$ is the identity on $D\cup C_X$, we obtain $(A_X\cup B_X)w =A_X\cup B_X$. This implies $w\in W_{\mu_X}$ and $w\in W_{X,d_Y}$.

Conversely, let $d\in\scrD_\lambda$ with $W_{X,d}\neq\emptyset$. Then there exits $w\in W_{\mu_X}$ and $e\in\scrD_{\nu_X}\cap W_{\lambda}$ with $w=edd_X^{-1}$. Since $w\in W_{\mu_X}$, we have $(C_X)w=C_X$, and since $e\in W_{\lambda}$, we have $(C_X)e\subseteq(B_X\cup C_X)e=B_X\cup C_X$. Therefore,
\begin{equation*}
  Y:= (B_X\cup C_X)d \supseteq (C_X)ed = (C_X)wd_X = (C_X)d_X = X
\end{equation*}
and $|Y|=|B_X\cup C_X|=n$. We claim that $d=d_Y$. For $j\notin\{k,k+1\}$ and $D_j:=\{\lambda_1+\cdots+\lambda_{j-1}+1,\ldots,\lambda_1+\cdots+\lambda_{j-1}+\lambda_j\}$ we have
\begin{equation*}
  (D_j)d = (D_j)ed = (D_j)wd_X = (D_j) d_X = (D_j)d_Y\,,
\end{equation*}
since $e\in W_\lambda$ and $w\in W_{\mu_X}$. Moreover, $d$ and $d_Y$ are monotonous on $D_j$, since $d,d_Y\in\scrD_\lambda$. Therefore, $d$ and $d_Y$ coincide on $D_j$. Further, $(B_X\cup C_X) d_Y = Y = (B_X\cup C_X) d$ and $d$ and $d_Y$ are monotonous on $B_X\cup C_X$. Therefore, $d$ and $d_Y$ coincide on $B_X\cup C_X$. The above implies that $(A_X)d=(A_X)d_Y$ and since $d$ and $d_Y$ are monotonous on $A_X$, they also coincide on $A_X$. Thus, $d=d_Y$.

(b) Using Lemma~\ref{lemma 1}(b) and Part (a), we have
\begin{equation*}
  \varepsilon_X\circ\varphi_1^{\mu_X,\lambda} =
  \sum_{d\in\scrD_\lambda \atop W_{X,d}\neq\emptyset} q^{l(w_{X,d})}\varepsilon_d = 
  \sum_{X\subseteq Y\subseteq Z \atop |Y|=n} q^{l(w_{X,d_Y})} \varepsilon_{d_Y}
\end{equation*}
and (b) is proven.

(c) By (\ref{w is identity}), $w_{X,Y}$ is the identity on $D\cup C_X$, and we can view $w_{X,Y}$ as a permutation of $A_X\cup B_X$. We claim that $w:=w_{X,Y}$ is the permutation of $A_X\cup B_X$ with maps $B_X$ monotonously onto the positions of $Y\smallsetminus X$ in the $k$-th row of $t_X$. In fact, we need to show that $(B_X)wd_X = Y\smallsetminus X$ and that $w$ is monotonous on $B_X$. But by (\ref{e definition}) we have
\begin{equation*}
  (B_X)wd_X = (B_X)ed_Y = \bigl((B_X\cup C_X)\smallsetminus(X)d_Y^{-1}\bigr) d_Y = Y\smallsetminus X\,,
\end{equation*}
where $e$ is defined as in Part (a). Moreover, since $(B_X)w\subseteq A_X\cup B_X$ and since $d_X$ is monotonous on $A_X\cup B_X$, $w$ is monotonous on $B_X$ if and only if $wd_X=ed_Y$ is. Again, since $(B_X)e\subseteq B_X\cup C_X$ and $d_Y$ is monotonous on $B_X\cup C_X$, $ed_Y$ is monotonous on $B_X$ if and only if $e$ is. But, $e\in\scrD_{\mu_X}$ is monotonous on $A_X\cup B_X$ and so also on $B_X$. This shows the claim.

Since $l(w)$ is equal to the number of inversions of $w$, where $w$ is viewed as a permutation of $A_X\cup B_X$, our claim immediately implies the desired equation if we can also show that $w$ is monotonous on $A_X$. But, $w$ is monotonous on $A_X$ if and only if $wd_X=ed_Y$ is, by the argument above. Further, $e$ is the identity on $A_X$, cf.~(\ref{e definition}), so that with $d_Y$ also $ed_Y$ is monotonous on $A_X$, and the proof of the lemma is complete.
\end{proof}

Next we consider the element
\begin{equation*}
  c_1:=\sum_{X\subseteq \{a_1,\ldots,a_i\}} (-1)^{|X|} q^{f(X)}
  \bigl(\varphi_1^{\mu_{\Xtilde},\lambda} \otimes \varepsilon_{\Xtilde}\bigr) \in C_1^\lambda\,,
\end{equation*}
where
\begin{equation*}
  \Xtilde:= X\cup \{b_{i+2},\cdots,b_n\}
\end{equation*}
and
\begin{equation*}
  f(X):=\sum_{j\in\{1,\ldots,i\} \atop a_j\in X} (m+1-j)\,.
\end{equation*}

\begin{lemma}\label{lemma 3}
With the above notation, one has
\begin{equation*}
  d_1^\lambda(c_1)\in q^l \varepsilon_t + C_{0,<t}\,,
\end{equation*}
for some integer $l$.
\end{lemma}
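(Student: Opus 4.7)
The plan is to apply $d_1^\lambda$ to $c_1$, expand using Lemma~\ref{lemma 2}(b), swap the two summations so the outer variable is $Y$, and show that the coefficient of each $\varepsilon_{t_Y}$ either vanishes or contributes to $q^l\varepsilon_t+C_{0,<t}^\lambda$. Concretely, I would first write
\begin{equation*}
d_1^\lambda(c_1)=\sum_{X\subseteq\{a_1,\ldots,a_i\}}\ \sum_{\Xtilde\subseteq Y\subseteq Z,\,|Y|=n}(-1)^{|X|}q^{f(X)+l(w_{\Xtilde,Y})}\varepsilon_{t_Y},
\end{equation*}
then interchange the sums so that the outer index is $Y$ (with $\{b_{i+2},\ldots,b_n\}\subseteq Y$, $|Y|=n$) and the inner index is $X\subseteq Y\cap\{a_1,\ldots,a_i\}$.

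Writing $Y_a:=Y\cap\{a_1,\ldots,a_m\}$ and applying Lemma~\ref{lemma 2}(c) to the decomposition $Y\setminus\Xtilde=(Y_a\setminus X)\sqcup(Y\cap\{b_1,\ldots,b_{i+1}\})$, the length $l(w_{\Xtilde,Y})$ splits as $C(Y)-\sum_{a_j\in X}N(a_j)$, where $N(a_j):=|\{z\in Z\setminus Y:z>a_j\}|$ and $C(Y)$ is independent of $X$. Together with $f(X)=\sum_{a_j\in X}(m+1-j)$, the inner sum over $X$ factors as
\begin{equation*}
\text{coefficient of }\varepsilon_{t_Y}\;=\;q^{C(Y)}\prod_{a_j\in Y\cap\{a_1,\ldots,a_i\}}\bigl(1-q^{(m+1-j)-N(a_j)}\bigr).
\end{equation*}

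I would then distinguish three cases for $Y$. When $Y=Y_t:=\{b_1,\ldots,b_n\}$, the tableau $t_Y$ equals $t$, the product is empty, and the contribution is $q^l\varepsilon_t$ with $l:=C(Y_t)$. When $t_Y<t$, the summand lies in $C_{0,<t}^\lambda$ and is harmless. The remaining case $Y\neq Y_t$ with $t_Y>t$ requires the coefficient to vanish. For such $Y$ I would let $a_{j_*}:=\min(Y\triangle Y_t)$; a comparison of rows $k$ and $k+1$ of $t_Y$ and $t$ in lex order shows that $t_Y>t$ forces $\min(Y\triangle Y_t)\in Y\setminus Y_t=Y_a$, so $a_{j_*}$ is a well-defined element of $Y_a$.

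The technical heart of the argument, which I expect to be the main obstacle, is verifying both $j_*\le i$ and $N(a_{j_*})=m+1-j_*$. For $j_*\le i$, I would argue by contradiction: if $j_*\ge i+1$ then necessarily $m>i$ and $a_{j_*}\ge a_{i+1}>b_{i+1}$, so every $b_j$ with $j\in\{1,\ldots,i+1\}$ lies strictly below $\min(Y\triangle Y_t)$ and therefore lies in $Y\cap Y_t$; combined with the inclusion $\{b_{i+2},\ldots,b_n\}\subseteq Y$ built into $\Xtilde$, this forces $|Y\cap\{b_1,\ldots,b_n\}|=n$, hence $Y_a=\emptyset$, contradicting $a_{j_*}\in Y_a$. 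For the count, minimality of $j_*$ forces $Y_a\subseteq\{a_{j_*},a_{j_*+1},\ldots,a_m\}$, while every $b\in Z\setminus Y$ lies above $a_{j_*}$, yielding
\begin{equation*}
N(a_{j_*})=\bigl((m-j_*)-(|Y_a|-1)\bigr)+|Y_a|=m+1-j_*.
\end{equation*}
Hence the factor of the product indexed by $a_{j_*}\in Y\cap\{a_1,\ldots,a_i\}$ equals $1-q^0=0$, killing the entire coefficient and completing the argument.
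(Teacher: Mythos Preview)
Your argument is correct and follows the same overall strategy as the paper: expand $d_1^\lambda(c_1)$ via Lemma~\ref{lemma 2}(b), interchange the sums so that the outer variable is $Y$, and then analyse the coefficient of each $\varepsilon_{t_Y}$. The difference is purely in packaging. The paper splits the $Y$'s according to whether $Y\cap\{a_1,\ldots,a_i\}$ is empty, proves separately that emptiness together with $Y\neq\{b_1,\ldots,b_n\}$ forces $t_Y<t$, and in the remaining case kills the coefficient by the sign-reversing involution $X\mapsto X\triangle\{a_j\}$ for the minimal $j$ with $a_j\in Y$, after establishing the ``extremal property'' $\{z\in Z\setminus Y: z<a_j\}=\{a_1,\ldots,a_{j-1}\}$. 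You instead split directly by the lexicographic relation between $t_Y$ and $t$, factor the inner sum as $q^{C(Y)}\prod_{a_j\in Y\cap\{a_1,\ldots,a_i\}}(1-q^{(m+1-j)-N(a_j)})$, and exhibit a zero factor when $t_Y>t$. Your index $j_*$ coincides with the paper's $j$, your identity $N(a_{j_*})=m+1-j_*$ is exactly the paper's extremal property rephrased, and your verification that $j_*\le i$ is the contrapositive of the paper's second case. The product factorisation is a slightly cleaner way to encode the same cancellation, and organising the cases by $t_Y\lessgtr t$ lets you absorb the paper's intermediate case into the main argument; conversely, the paper's explicit pairing avoids ever writing a possibly negative power of $q$ in an intermediate factor.
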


\begin{proof}
By Lemma~\ref{lemma 2}(b) we have
\begin{equation*}
\begin{split}
  d_1^\lambda(c_1) = & 
  \sum_{X\subseteq \{a_1,\ldots,a_i\}} (-1)^{|X|} q^{f(X)} 
  \bigl(\varepsilon_{\Xtilde}\circ \varphi_1^{\mu_{\Xtilde},\lambda}\bigr) \\
  = & \sum_{X\subseteq\{a_1,\ldots,a_i\}} (-1)^{|X|} q^{f(X)}
  \sum_{\Xtilde\subseteq Y\subseteq Z \atop  |Y|=n} q^{l(w_{\Xtilde,Y})} \varepsilon_Y\\
  = & \sum_{\{b_{i+2},\ldots,b_n\}\subseteq Y\subseteq Z \atop |Y|=n}
  \Bigl(\sum_{X\subseteq \{a_1,\ldots,a_i\}\cap Y} (-1)^{|X|} q^{f(X)+l(w_{\Xtilde,Y})} \Bigr) \varepsilon_Y\,.
\end{split}
\end{equation*}
For every $Y\subseteq Z$ with $\{b_{i+2},\ldots,b_n\}\subseteq Y$ and $|Y|=n$ we study the contribution of the corresponding summand at the end of the last equation. We distinguish three cases.

If $Y=\{b_1,\ldots,b_n\}$ then $\varepsilon_Y=\varepsilon_t$ and the only possible set $X$ in the inner sum of the last equation is the empty set. Therefore, the summand corresponding to $Y$ is equal to $q^l\varepsilon_t$ with $l=f(\emptyset)+l(w_{\widetilde{\emptyset},Y})$.

If $Y\cap \{a_1,\ldots,a_i\}=\emptyset$ and $Y\neq\{b_1,\ldots,b_n\}$ we claim that $t_Y<t$. Note that in this case we have $m\ge 1$. Let $j\in\{1,\ldots,i+1\}$ be minimal with $b_j\notin Y$ (note that $a_1<b_j$) and let $p\in\{j-1,\ldots,i\}$ be maximal with $a_p<b_j$ (note that $a_{j-1}<b_{j-1}<b_j$). Then the $k$-th row of $t_Y$ begins with $a_1, a_2,\ldots, a_p,b_j$ and $b_j<a_{p+1}$. Note that $p<m$, since if $p=m$ then $a_1,\ldots,a_m\in Z\smallsetminus Y$ and therefore $Y\subseteq\{b_1,\ldots,b_n\}$, a contradiction. Now our claim is proved and the contribution of the summand corresponding to $Y$ is an element in $C^\lambda_{0,<t}$.

We are left with the case that $Y\cap\{a_1,\ldots,a_i\}\neq\emptyset$. If $t_Y<t$ we are done. So assume that $t_Y>t$. Our goal is to show that
\begin{equation*}
  \sum_{X\subseteq\{a_1,\ldots,a_i\}\cap Y}
  (-1)^{|X|} q^{f(X)+l(w_{\Xtilde,Y})} = 0\,.
\end{equation*}
First let $j\in\{1,\ldots, i\}$ be minimal with $a_j\in Y$. We claim that
\begin{equation}\label{extremal property}
  \{z\in Z\smallsetminus Y \mid z<a_j\} = \{a_1,\ldots,a_{j-1}\}\,.
\end{equation}
Clearly, the right hand side is contained in the left hand side. Now let $z\in Z\smallsetminus Y$ with $z<a_j$ and assume that $z$ is not contained in the right hand side. Then $z=b_p$ for some $p\in\{1,\ldots, i+1\}$, since $b_{i+2},\ldots,b_n\in Y$. Let $r\in\{1,\ldots,i+1\}$ be minimal with $b_r\in Z\smallsetminus Y$ and $b_r<a_j$, and let $s\in\{1,\ldots,j-1\}$ be maximal with $a_s<b_r$ (note that $j>1$, since $b_r<a_j$ and that $a_1<b_r$). Then the $k$-th row of $t_Y$ starts with $a_1,a_2,\ldots,a_s,b_r$ and one has $b_r<a_{s+1}$. This implies $t_Y<t$, a contradiction. Therefore, the claim (\ref{extremal property}) is proven. For our given set $Y$, the subsets $X$ of $\{a_1,\ldots,a_i\}\cap Y =\{a_j,\ldots,a_i\}\cap Y$ fall into two classes, the ones that contain $a_j$ and the ones that don't. So let $X_1$ be a subset of $\{a_1,\ldots,a_i\}\cap Y$ with $a_j\in X_1$ and let $X_2:=X_1\smallsetminus \{a_j\}$. Since $X_1\mapsto X_2$ defines a bijection between these two classes, it suffices to show that 
\begin{equation*}
  (-1)^{|X_1|} q^{f(X_1)+l(w_{\Xtilde_1,Y})} + (-1)^{|X_2|} q^{f(X_2)+l(w_{\Xtilde_2,Y})} =0\,,
\end{equation*}
or, equivalently, that
\begin{equation}\label{difference}
  l(w_{\Xtilde_1,Y})+m+1-j = l(w_{\Xtilde_2,Y})\,.
\end{equation}
But by Lemma~\ref{lemma 2}(c) and Equation~(\ref{extremal property}), and since $Y\smallsetminus \Xtilde_2 = Y\smallsetminus \Xtilde_1 \cup \{a_j\}$ and $|Z\smallsetminus Y|=m$, we have
\begin{align*}
  & \quad l(w_{\Xtilde_2,Y}) - l(w_{\Xtilde_1,Y}) \\
  = \  & \sum_{y\in Y\smallsetminus \Xtilde_2} |\{z\in Z\smallsetminus Y \mid y<z\}| -
  \sum_{y\in Y\smallsetminus \Xtilde_1} |\{z\in Z\smallsetminus Y \mid y<z\}|\\
  = \ & |\{z\in Z\smallsetminus Y \mid a_j<z\}| = m - |\{z\in Z\smallsetminus Y \mid z<a_j\}| = m-(j-1)\,.
\end{align*}
This shows Equation~(\ref{difference}) and the proof of the lemma is complete.  
\end{proof}

Now Lemma~\ref{key lemma} is an immediate consequence of Lemma~\ref{lemma 3}.


\end{document}